%
%
\documentclass[12pt]{amsart}
\usepackage{graphicx}
\graphicspath{{../Figures/}}
\usepackage{amssymb}
\usepackage{bm}
\usepackage{tikz}
\usepackage{mathtools}
\allowdisplaybreaks
\def\smash#1{{\setbox0=\hbox{#1}\wd0=0pt  \box0\relax}}


\newtheorem{theorem}{Theorem}
\newtheorem{lemma}[theorem]{Lemma}
\newtheorem{proposition}[theorem]{Proposition}
\newtheorem{corollary}[theorem]{Corollary}

\theoremstyle{definition}
\newtheorem{definition}[theorem]{Definition}
\newtheorem{notation}[theorem]{Notation}

\theoremstyle{remark}
\newtheorem{remark}[theorem]{Remark}
\newtheorem{example}[theorem]{Example}

\hyphenation{asymp-tot-i-cally}


\newcommand{\ab}{\allowbreak}
\newcommand{\alg}{\textrm{alg}}

\newcommand{\bD}{{\bm D}}
\newcommand{\E}{\mathrm{E}}

\let\phi=\varphi
\newcommand{\mrp}{\textsc{mp}}

\newcommand{\tr}{\textrm{tr}}

\newcommand{\ltr}{{\reflectbox{\tiny$\Gamma$}}}


\newcommand\EE{\mathbb{E}}
\newcommand{\bC}{\mathbb{C}}


\newcommand{\cA}{\mathcal A}
\newcommand{\cB}{\mathcal{B}}
\newcommand{\cD}{\mathcal D}

\newcommand{\cN}{\mathcal N}



\newcommand{\ds}{\displaystyle}

\newcommand{\thebottomline}{%
\renewcommand{\thefootnote}{}
\renewcommand{\footnoterule}{}
\phantom{M}\footnotetext{\hfill\tiny\textit{\noindent\romannumeral\day.\romannumeral\month.\romannumeral\year}}}

\title[{Cyclic Group and the Transpose of an $R$-cyclic
    matrix}] {The Cyclic Group and \\ the Transpose of an
  $R$-cyclic matrix}

\author[arizmendi]{Octavio Arizmendi$^{(\dagger)}$}
\address{Centro de Investigaci{\'o}n en Matem{\'a}ticas,
  Guanajuato, Mexico} \email{octavius@cimat.mx}
\thanks{$^{(\dagger)}$Research supported by Conacyt Grant
  A1-S-9764 and Simons CRM Scholar-in-Residence, March
  2019.}

\author[mingo]{James A. Mingo$^{(*)}$} \address{Department
  of Mathematics and Statistics, Queen's University, Jeffery
  Hall, Kingston, Ontario, K7L 3N6, Canada}

\email{mingo@mast.queensu.ca} 

\thanks{$^{(*)}$Research supported by a Discovery Grant from
  the Natural Sciences and Engineering Research Council of
  Canada and Simons CRM Scholar-in-Residence, March 2019.}

\dedicatory{Dedicated to Dan-Virgil Voiculescu on his
  70$^{\,th}$ birthday.}

\begin{document}

\begin{abstract}
We show that using the cyclic group the transpose of an
$R$-cyclic matrix can be decomposed along diagonal parts
into a sum of parts which are freely independent over
diagonal scalar matrices. Moreover, if the $R$-cyclic matrix
is self-adjoint then the off-diagonal parts are
$R$-diagonal.
\end{abstract}

\maketitle

\section{Introduction}
After self-adjoint operators there are only a few classes of
operators with a good spectral theory; one of them is the
class of $R$-diagonal operators introduced by Nica and
Speicher. Indeed, Haagerup and Schultz \cite{hs} showed that
for $R$-diagonal operators in a von Neumann algebra which is
a factor of type II, one has an abundance of invariant
subspaces. The standard examples of $R$-diagonal operators
include Haar unitaries and circular operators. More
generally one can consider the product $ua$ where $u$ is a
Haar unitary in a $*$-probability space, $*$-free from $a$,
see \cite[Cor. 15.9]{ns}. $R$-cyclic operators were
introduced by Nica, Shlyakhtenko, and Speicher in
\cite{nss02} as a generalization of $R$-diagonal
operators. A simple way to see this is to consider the
following example from Nica-Speicher \cite[Thm. 14.18]{ns}.

Suppose $(\cA, \phi)$ is a non-commutative $*$-probability
space containing a set $\{E_{ij}\}_{i,j = 1}^d$ of matrix
units in $\cA$. Letting $\cB = E_{11}\cA E_{11}$, we may write $\cA =
M_d(\cB)$ and any $X \in \cA$ as $X = (x_{ij})_{i,j = 1}^d$
with $x_{ij} = E_{1i} X E_{j1}$. If $X$ is self-adjoint and
free from $\{E_{ij}\}_{i,j=1}^d$, then the off-diagonal
entries $x_{ij}$ ($i\not = j$) are $R$-diagonal.  Let us
recall the definition of $R$-diagonal and $R$-cyclic
operators from \cite[Lects. 15 and 20]{ns}.

\begin{definition}\label{def:r-diagonal}
If $(\cA, \phi)$ is a $*$-probability space and $a \in \cA$,
we shall write $a^{(1)}$ for $a$ and $a^{(-1)}$ for
$a^*$. Then $a$ is $R$-diagonal if the free cumulant
$\kappa_n\big(a^{(\epsilon_1)}, a^{(\epsilon_2)}, \dots,
a^{(\epsilon_n)}\big) = 0$ unless: $n$ is even and
$\epsilon_i = - \epsilon_{i+1}$ for $1 \leq i \leq n-1$.
\end{definition}

\begin{definition}\label{def:r-cyclic}
Let $(\cA, \phi)$ be a non-commutative probability space. A
matrix $A = (a_{ij})_{ij} \in M_d(\cA)$ is
$R$-\textit{cyclic} if for all $n$ and all $i_1, \dots i_n,
j_1,\ab \dots,\ab j_n \in [d]$ the free cumulant
$\kappa_n(a_{i_1j_1}, \dots, a_{i_nj_n}) = 0$ unless $j_1 =
i_2, \dots, j_n = i_1$.
\end{definition}

\begin{example}
Suppose $(\cA, \tau)$ is a $*$-probability space and $x \in
\cA$.  Let $X = \left(\begin{smallmatrix}0 & x \\ x^* &
  0\end{smallmatrix}\right)$. Then $x$ is $R$-diagonal if
  and only if $X$ is $R$-cyclic in $M_2(\cA)$ relative to
  $\phi = \tr \otimes \tau$ on $M_2(\bC) \otimes \cA =
  M_2(\cA)$. In \cite[Thm. 1.2]{nss01} it was shown that the
  $R$-diagonality of $x$ was equivalent to the freeness of
  $X$ from $M_2(\bC)$ over the subalgebra of $2 \times 2$
  diagonal scalar matrices.
\end{example}

\begin{example}\label{ex:free_from_scalars}
In \cite[Ex. 20.4 ]{ns} it is shown that if $X \in \cA$ is
free from a set of matrix units in $\cA$, then $X$ is
$R$-cyclic relative to this set of matrix units. But this is
not the most general situation. In \cite[Thm. 8.2]{nss02} it
is shown that if $(M_d(\cA), \phi)$ is a non-commutative
probability space and $A \in M_d(\cA)$ is $R$-cyclic then
$A$ is free from $M_d(\bC)$ over $\cD_d$, where $\cD_d
\subseteq M_d(\bC)$ is the subalgebra of diagonal matrices.
We shall work with this formulation. 
\end{example}

\begin{remark}\label{rem:abundance}
The significance of $R$-diagonal and $R$-cyclic matrices is
that orthogonally invariant (and hence unitarily) random
matrix models produce $R$-cyclicity and $R$-diagonality. In
\cite[Thm. 6.2]{cc} it was shown that orthogonally invariant
and constant matrices are asymptotically free provided the
constant matrices converge in distribution (another proof
was given in \cite[Thm.~36]{mp}). Suppose $X_N$ is an
orthogonally invariant ensemble with a limit
distribution. Write $N = d \times p$ and $M_N(\bC) =
M_d(\bC) \otimes M_p(\bC)$. Let $E_{ij} = e_{ij} \otimes 1_p
\in M_d(\bC) \otimes M_p(\bC)$ be the standard matrix
units. Then $\{E_{ij}\}_{ij}$ converges to a set of matrix
units as $p \rightarrow \infty$. Indeed the mixed moments
are stationary. Then $X_N$ and $\{E_{ij}\}_{ij}$ are
asymptotically free and thus we may realize the limit
distribution of $X_N$ as an element $x$ in a non-commutative
probability space $(\cA, \phi)$ where there is a system of
matrix units $\{E_{ij}\}_{ij}$ which are free from $x$. By
our earlier discussion such an $x$ is $R$-cyclic relative to
$\{E_{ij}\}_{ij}$.  The orthogonal invariance assumption is
not a necessary condition, for example constant matrices and
Wigner matrices are asymptotically free. A proof for real
Wigner matrices is given in \cite[Thm. 4.20]{ms}. Thus
$R$-cyclic operators occur very naturally. 
\end{remark}

\begin{remark}\label{rem:cyclic_group}
Our main tools will be the two representations of the cyclic
group of order $d$ in $d \times d$ matrices. The generators
for these actions will be the diagonal matrix $\bD$ with
$d^{th}$ roots of unity on the diagonal and $S$ the matrix
that cyclically permutes the standard basis elements.  See
Notation \ref{def:xs-decomposition} for the notation. These
two representations are intertwined by the Fourier transform
matrix. This marks yet another case where the action of a
symmetry group produces freeness but the first time the
group is the cyclic group. The crucial point for us is that
we can write the transpose in terms of these dual actions,
see Lemma \ref{lemma:x-relation}, and $S$ and $\bD$ are free
from our matrix $X$ over the diagonal scalar matrices.

Another place where a group invariance plays a role is in
traffic freeness. In this case the group is the symmetric
group. A recent paper of Au, C\'ebron, Dahlqvist, Gabriel,
and Male \cite{au} shows the connection to freeness over the
diagonal.
\end{remark}

\begin{remark}\label{rem:transpose}
The role of the transpose in free probability arose in the
work of Aubrun \cite{a} when he showed that in a certain
regime the partial transpose of Wishart matrix converged to
a semi-circle law. To review this, let $G_1, \dots ,
G_{d_1}$ be independent $d_2 \times p$ complex Gaussian
random matrices. By this we mean $G_i =
(g^{(i)}_{j,k})_{j,k}$ with $\{g^{(i)}_{j,k}\}_{i,j,k}$
independent complex Gaussian $\cN(0,1)$ random variables. We
let
\begin{equation}\label{eq:wishart}
W =  \frac{1}{d_1d_2}\left(
\begin{array}{c}
G_1 \\ \hline
\vdots \\ \hline
G_{d_1}
\end{array}
\right)
\left(
\begin{array}{c|c|c}
G_1^* & \cdots & G_{d_1}^*
\end{array} \right) =
\frac{1}{d_1d_2} (G_iG_j^*)_{ij}
\end{equation}
and $W^\ltr = \ds \frac{1}{d_1d_2} (G_jG^*_i)_{ij}$ be the
\textit{partial transpose} of $W$. Aubrun showed that when
$d_1, d_2$ and $p \longrightarrow \infty$ such that $\ds
\frac{p}{d_1 d_2} \rightarrow c$, $W^\ltr$ converges to a
semi-circular operator with free cumulants $\kappa_1 =
\kappa_2 = c$. If we fix $d_1$ and have $d_2, p \rightarrow
\infty$ we are in the regime of Banica and Nechita
\cite{bn}.  They showed that $d_1W^\ltr$ converges in
distribution to the free difference of two Marchenko-Pastur
laws.

Recall that for each $0 < c < \infty$ there is a probability
distribution called the Marchenko-Pastur law with parameter
$c$, denoted $\mrp_c$.  We let $a = (1 - \sqrt c)^2$ and $b
= (1 + \sqrt c)^2$; $\mrp_c$ has density $\ds\frac{ \sqrt{(b
    - t)(t - a)}}{2 \pi t}$ on the interval $[a, b]$ for $c
\geq 1$ and for $0 < c < 1$ has in addition an atom of mass
$1 - c$ at 0, see \cite[Def. 2.11]{ms}. Note that in
Eq. (\ref{eq:wishart}) we have used a different
normalization than Aubrun and Banica-Nechita, in order to
simplify the notation.

So let us suppose that $d_1$ is fixed. Then $W$ converges to
$w$ an operator with distribution $\mrp_c$. In
\cite[Thm. 3.7]{mp} the mixed moments of $w$ and $w^t$ were
given. If we adopt the convention here that $w^{(1)} = w$
and $w^{(-1)} = w^t$ then \[\phi(w^{(\epsilon_1)} \cdots
w^{(\epsilon_n)}) = \sum_{\pi \in NC(n)} c^{\#(\pi)}
d_1^{f_\epsilon(\pi)}\] where $f_\epsilon(\pi) = \#(\epsilon
\gamma \delta \gamma^{-1} \epsilon \vee \pi \delta \pi^{-1})
+ \#(\pi) - (n + 1) \leq 0 $ is an integer. See \cite[\S
  3]{mp} for an explanation of the notation. The point we
need here is that $W$ and $W^\ltr$ have a joint limit
distribution and $\phi(ww^t) = cd_1^{-1} + c^2 \not= c^2 =
\phi(w) \phi(w^t)$. In particular $w$ and $w^t$ are not
free.

Since $W$ is asymptotically free from the matrix units
$\{E_{ij}\}_{ij=1}^{d_1}$ we have that we may realize $w$ as
a matrix $(w_{ij})_{ij=1}^{d_1}$ with entries in a
non-commutative probability space and $w$ is free from the
matrix units. Thus $w$ is $R$-cyclic.

Banica and Nechita showed that when $d_1$ is fixed,
$d_1W^\ltr$ converged in distribution to an operator $x_1 -
x_2$ in a non-commutative probability space where $x_1$ and
$x_2$ are free and which have distribution $\mrp_{c_1}$
and $\mrp_{c_2}$ respectively with $c_1 = {c d_1
  \frac{d_1 + 1}{2}}$ and $c_2 = {c d_1 \frac{d_1 -
    1}{2}}$.

Note that if $x = x_ 1 - x_2$ with $x_i \in \mrp_{c_i}$
then the free cumulants $\{\kappa_n\}_n$ of $x_i$ are
$\kappa_n = c_i$ for all $n$. Thus the free cumulants of
$x$ are given by
\[
\kappa_n = c_1 + (-1)^n c_2 = c d_1 \bigg( \frac{d_1 +
  1}{2} + (-1)^n \frac{d_1 - 1}{2}\bigg) =
\begin{cases}
  c d_1 d_1 & n \mathrm{\ even\ } \\ c d_1  & n
  \mathrm{\ odd\ }
\end{cases}
\]

One of the motivations for the present work is to present an
intrinsic description of this distribution in terms of the
matrix $W^\ltr$ itself.  Indeed we shall show that the
diagonal decomposition of $d_1W^\ltr$ converges to a free
family of $d_1/2 +1$ self-adjoint operators (assuming $d_1$
is even) such that all even cumulants are $cd_1$ and all odd
cumulants are $0$ except for the first operator which has
all cumulants equal to $cd_1$.

Let us illustrate this when $d_1 = 2$. We write the limit
distribution of $W$ as $w = \frac{1}{d_1}\begin{pmatrix}
  w_{11} & w_{12} \\ w_{21} & w_{22} \end{pmatrix}$. We have
that $w$ is $\mrp_c$ and free from $M_2(\bC)$. Then the
limit distribution of $W^\ltr$ is $w^t =
\frac{1}{d_1}\begin{pmatrix} w_{11} & w_{21} \\ w_{12} &
  w_{22} \end{pmatrix}$. We write $d_1w^t = X_0 + X_1$ with
\[X_0 = \begin{pmatrix} w_{11} & 0 \\ 0 &
  w_{22} \end{pmatrix} \mbox{ and\ } X_1 = \begin{pmatrix} 0 &
  w_{21} \\ w_{12} & 0 \end{pmatrix}.\] In
\cite[Thm. 6.14]{mp} it was shown that $X_0$ and $X_1$ are
free. $X_0$ is $\mrp_{d_1c}$ and $X_1$ is an even operator
with even cumulants $d_1 c$.  In this paper we extend this
to the general case $d_1 \geq 1$. A precise statement is
given in Section \ref{sec:return_wishart}, Theorem
\ref{thm:original_case}. Note that we reach the stronger
conclusion of freeness over the scalars because the
entries of our matrix have the same distribution--depending
on whether they are diagonal or off diagonal. In the 
absence of this property we only get freeness over 
diagonal scalar matrices. 
\end{remark}

\section{Preliminaries and Notations}

\subsection{Operator valued probability spaces}
For a non commutative probability space $(\cA,\tau)$, let us
consider the non commutative probability space
$(M_d(\cA),\phi)$ where $\phi=\tr\otimes\tau$. That is $A
\in M_d(\cA)$,
$\phi(A)=\frac{1}{d}(\tau(A_{11})+\cdots+\tau(A_{dd}))$.

Let us denote by $\mathcal{D} = \mathcal{D}(\mathbb{C})
\subseteq \mathcal{D}(\cA) \subseteq M_d(\cA)$, the
subalgebras of scalar diagonal matrices and diagonal
matrices with entries in $\cA$.  We shall denote by
$\tilde\phi: M_d(\cA)\to \mathcal{D}$ and $\E:
M_d(\cA)\to\mathcal{D}(\cA)$, the unique conditional
expectations to $\mathcal{D}$ and, respectively, to
$\mathcal{D}(\cA)$ which are consistent with $\phi$.  For
$X=(x_{ij})_{i,j}\in M_d(\cA)$ these are given explicitly by
{\small\[
\E(X) =
\begin{pmatrix}  x_{11} & 0      & \cdots &   0    \\
                         0    & x_{22} & \cdots &   0    \\
                       \vdots & \vdots &        & \vdots \\
                         0    &   0    & \cdots &  x_{dd}
       \end{pmatrix},\
  \tilde \phi(X) =
\begin{pmatrix}  \tau(x_{11}) & 0      & \cdots &   0    \\
                         0    &\tau(x_{22}) & \cdots &   0    \\
                       \vdots & \vdots &        & \vdots \\
                         0    &   0    & \cdots &  \tau(x_{dd}).
       \end{pmatrix}.
\]}

Let us recall the notion of operator-valued probability
space and freenees with amalgamation (for a detailed
exposition see \cite{Sp98}).

\begin{definition}
A $\mathcal{B}$-valued probability space is a triplet $(\cA,
\mathcal{B}, \mathbb{E})$, consisting of a unital algebra
$A$, a unital subalgebra $\cB\subset \cA$ and a conditional
expectation $\mathbb{E}: \cA \rightarrow \mathcal{B}$ , i.e. a
unit-preserving linear map such that
$\mathbb{E}(b_1ab_2)=b_1\mathbb{E}(a)b_2$ for any
$a\in\mathcal{A}$ and $b_1, b_2 \in \mathcal{B}$.
\end{definition}

\begin{definition}
Given a $\mathcal{B}$ valued probability space $(\cA,
\mathcal{B}, \mathbb{E})$), a family of subalgebras
$(\cA_i)_i$ with $\mathcal{B} \subset \cA_i\subset \cA$ for
each $i$ is said to be free with amalgamation over
$\mathcal{B}$, if $\mathbb{E}(x_1 \cdots x_p) = 0$, whenever
$x_j \in\cA_{i(j)}$ , $\mathbb{E}(x_j ) = 0$, for all $j$,
and $i(j) \neq i(j + 1)$, $j = 1, \dots, p - 1$. A family
$\{s_1, \dots , s_r\}$ of $\mathcal{B}$-valued random
variables in $\mathcal{A}$ are free with amalgamation over
$\cB$, if the family of subalgebras $\mathrm{alg}\langle
s_i, \cB\rangle$, $i = 1, \dots , r$, are free with
amalgamation over $\mathcal{B}$.
\end{definition}

\subsection{Operator-valued free cumulants}
Let us denote by $NC(n)$ the set of non-crossing partitions
of of $[n]$ (\cite[chapter 9]{ns},  and by
$\mathcal{NC}=\cup^{\infty}_{n=1} NC(n)$.

For $n\in\mathbb{N}$, a $\mathbb{C}$-multi-linear map
$f:\mathcal{A}^{n}\to\mathcal{B}$ is called
$\mathcal{B}$\textit{-balanced} if it satisfies the
$\mathcal{B}$-bilinearity conditions, that for all
$b,b'\in\mathcal{B}$, $a_{1},\dots,a_{n}\in\mathcal{A}$, and
for all $r=1,\dots,n-1,$
\begin{eqnarray*}
f\left(ba_{1},\dots,a_{n}b'\right) &=&
bf\left(a_{1},\dots,a_{n}\right)b'\\ f\left(a_{1},\dots,a_{r}b,a_{r+1},\dots,a_{n}\right)
&=& f\left(a_{1},\dots,a_{r},ba_{r+1}\dots,a_{n}\right)
\end{eqnarray*}

A collection of $\mathcal{B}$-balanced maps
$\left(f_{\pi}\right)_{\pi\in \mathcal{NC}}$ is said to be
{multiplicative} with respect to the lattice of non-crossing
partitions if, for every $\pi\in \mathcal{NC}$, $f_{\pi}$ is
computed using the block structure of $\pi$ in the following
way:

1. If $\pi=\hat{1}_{n}\in NC\left(n\right)$, we just write
$f_{n}:=f_{\pi}$.

2. If $\hat{1}_{n}\neq\pi=\left\{ V_{1},\dots,V_{k}\right\}
\in NC\left(n\right),$ then by a known characterization of
$\mathcal{NC}$, there exists a block $V_{r}=\left\{
s+1,\dots,s+l\right\} $ containing consecutive elements. For
any such a block we must have
\begin{equation*}
  f_{\pi}\left(a_{1},\dots,a_{n}\right) = f_{\pi\backslash
    V_{r}}\left(a_{1}, \dots, a_{s} f_{l} \left(a_{s+1},
  \dots, a_{s+l}\right), a_{s+l+1}, \dots, a_{n}\right),
\end{equation*}
where $\pi\backslash V_{r}\in NC\left(n-l\right)$ is the
partition obtained from removing the block $V_{r}$.

The {operator-valued of $\cB$ free cumulants}
$\left(\kappa^{\cB}_{\pi}\right)_{\pi\in \mathcal{NC}}$ are
defined as the unique multiplicative family of
$\mathcal{B}$-balanced maps satisfying the (operator-valued)
moment-cumulant formulas
\begin{equation*}
\EE\left(a_{1}\dots a_{n}\right)=\sum_{\pi\in NC
  \left(n\right)}\kappa^{\cB}_{\pi}\left(a_{1},\dots,a_{n}\right)
\end{equation*}

By the free cumulants of a tuple
$(a_{1},\dots,a_{k})\in\mathcal{A}^k$, we mean the
collection of all cumulant maps
\begin{equation*}
\begin{array}{cccc}
\kappa_{i_{1},\dots,i_{n}}^{\mathcal{B};a_{1},\dots,a_{k}}:
& \mathcal{B}^{n-1} & \to & \mathcal{B},\\ &
\left(b_{1},\dots,b_{n-1}\right) & \mapsto &
\kappa^{\cB}_{n}\left(a_{i_{1}},b_{1}a_{i_2},\dots,b_{i_{n-1}}a_{i_n}\right)\end{array}
\end{equation*}
for $n\in\mathbb{N}$, $1\leq i_{1},\dots,i_{n}\leq k$.

Given subalgebras $(\cA_i)_i$ such that $\cB \subseteq \cA_i
\subseteq \cA$ for each $i$ and elements $a_1, \dots, a_n$
such that $a_j \in \cA_{i_j}$, a free cumulant map
$\kappa_{i_{1},\dots,i_{n}}^{\mathcal{B};a_{1},\dots,a_{k}}$
is {mixed} if there exists $r < s$ such that $i_{r}\ne
i_{s}$. The main feature of the operator-valued cumulants is
that they characterize freeness with amalgamation.

\begin{proposition}[\cite{Sp98}]
The random variables $a_{1},\dots,a_{n}$ are
$\mathcal{B}$-free if and only if all their mixed cumulants
vanish.
\end{proposition}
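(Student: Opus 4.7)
The plan is to prove both implications using the operator-valued moment-cumulant formula, with the direction ``vanishing mixed cumulants implies freeness'' being the substantial one. I work at the level of the subalgebras $\cA_i := \mathrm{alg}\langle a_i,\cB\rangle$, so it suffices to verify the alternating-centered product condition from the definition of $\cB$-freeness.

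For ``vanishing mixed cumulants implies freeness,'' I would fix elements $x_1,\dots,x_p$ with $x_j\in\cA_{i(j)}$, $\mathbb{E}(x_j)=0$, and $i(j)\neq i(j+1)$, and show $\mathbb{E}(x_1\cdots x_p)=0$. Expanding via the operator-valued moment-cumulant formula gives
\[
\mathbb{E}(x_1\cdots x_p)=\sum_{\pi\in NC(p)}\kappa^{\cB}_\pi(x_1,\dots,x_p).
\]
The hypothesis kills every $\pi$ that has a block meeting two different subalgebras, so only partitions whose blocks are ``color-monochromatic'' in the labeling $i(1),\dots,i(p)$ survive. Any non-crossing partition admits an interval block $\{s+1,\dots,s+\ell\}$; for such a block to be monochromatic while the full coloring alternates, we must have $\ell=1$. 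Peeling off this singleton via the multiplicativity rule defining $\kappa^{\cB}_\pi$ produces the factor $\kappa^{\cB}_1(x_{s+1})=\mathbb{E}(x_{s+1})=0$, so the whole term vanishes. Thus every summand is zero and $\mathbb{E}(x_1\cdots x_p)=0$.

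For the converse direction, I would argue by induction on the cumulant length $n$ that any mixed $\kappa^{\cB}_n(x_1,\dots,x_n)$ with $x_j\in\cA_{i(j)}$ and not all $i(j)$ equal must vanish. Möbius inversion on $NC(n)$ expresses such a cumulant as a signed sum of moments $\mathbb{E}(y_1\cdots y_k)$ built from subproducts of the $x_j$'s; each such product is still alternating between the same subalgebras (after combining adjacent same-color factors) and, together with the centering trick $x_j\mapsto x_j-\mathbb{E}(x_j)$ (which introduces only $\cB$-insertions into the cumulants of strictly smaller length, handled by induction), reduces to the defining alternating-centered vanishing of $\cB$-freeness.

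The main obstacle is the operator-valued bookkeeping: unlike the scalar case, the cumulants are $\cB$-balanced rather than merely $\mathbb{C}$-multilinear, so both the ``peeling off an interval block'' step and the centering step must respect the $\cB$-insertions sitting between the $x_j$. The multiplicativity rule defining $\kappa^{\cB}_\pi$ is precisely the tool that makes these reductions legal, but a complete argument has to carefully verify that the $\cB$-module structure is preserved at every step and that the induction on $n$ can genuinely peel off elements of $\cB$ appearing as $\mathbb{E}(x_j)$ without altering the alternating pattern.
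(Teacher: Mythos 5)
The paper offers no proof of this proposition---it is quoted verbatim from Speicher's memoir \cite{Sp98}---so your sketch can only be measured against the standard argument there (and in \cite[Lect.~11]{ns}), whose skeleton you do follow. As written, though, your forward direction has a genuine gap. The hypothesis you are given is the vanishing of the mixed cumulant \emph{maps} of the variables, i.e.\ of $\kappa^{\cB}_m(a_{i_1},b_1a_{i_2},\dots,b_{m-1}a_{i_m})$ for non-constant index tuples, whereas your expansion argument needs the vanishing of $\kappa^{\cB}_\pi(x_1,\dots,x_p)$ whenever some block of $\pi$ meets two different algebras $\cA_i=\mathrm{alg}\langle a_i,\cB\rangle$, with the $x_j$ arbitrary centred polynomials in $a_{i(j)}$ with $\cB$-coefficients. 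These are not the same statement: to pass from the variables to the generated algebras you must re-expand each block, whose entries are now products of $a$'s and $b$'s, using the products-as-arguments formula \eqref{arguments} quoted from \cite{Sp00}, argue by connectedness that every $\pi'$ with $\pi'\vee\sigma=1$ appearing there contains a block joining letters with two different indices, and dispose of the pure-$\cB$ components of the $x_j$ via the fact that cumulants of length at least $2$ with an argument from $\cB$ vanish. Asserting that ``the hypothesis kills every $\pi$ with a non-monochromatic block'' silently skips exactly this bridge, which is the real content of the ``vanishing implies freeness'' direction for variables rather than subalgebras.

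In the converse direction the needed auxiliary lemma is also misstated: by multilinearity, replacing $x_j$ by $x_j-\mathbb{E}(x_j)$ changes $\kappa^{\cB}_n$ by a cumulant of the \emph{same} length $n$ with one entry in $\cB$, not by ``cumulants of strictly smaller length with $\cB$-insertions''; that such cumulants vanish (the operator-valued analogue of \cite[Prop.~11.15]{ns}) is a separate statement that must be established first, by induction from the moment-cumulant formula and $\cB$-balancedness. Moreover, in the M\"obius (nested-expectation) expansion the blocks of a non-crossing partition group non-adjacent positions, so the resulting expressions are nested $\cB$-valued moments rather than alternating products, and ``combining adjacent same-colour factors'' destroys centredness. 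The clean way to organize the proof is to establish the equivalence at the level of the subalgebras $\cA_i$ first and only then specialize to the variables through \eqref{arguments}; your sketch has the right shape, but the two bridging lemmas you gesture at are precisely where the work lies.
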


Let us finally state the formula for products as arguments,
which will be used in the proof of our main results.

\begin{proposition} \cite{Sp00} 
Suppose $n_1,\dots,n_r$ are positive integers and $n= n_1 +
\cdots + n_r$.  Given a $\cB$-valued probability space
$(\cA, \mathcal{B}, \mathbb{E})$ and
$
a_1, \dots , \ab a_{n_1},\ab  a_{n_1+1}, \dots, a_{n_1+n_2}, \dots,
a_{n_1 + \cdots + n_{r}} \in \mathcal{A}
$,
let $A_1 = a_1 \cdots a_{n_1}$, $A_2= a_{n_1+1} \cdots \ab
a_{n_1+n_2}$, \dots, $\ab A_{r} = a_{n_1 + \cdots +
  n_{r-1}+1} \cdots \ab a_{n_1 + \cdots + n_{r}}$. Then
\begin{equation} \label{arguments}
\kappa_{r}(A_1,A_2 \dots, A_{r-1},A_{r}) = \mathop{\sum_{\pi
    \in NC(n)}}_{\pi \vee \sigma = 1_{n}} \kappa_{\pi} (a_1,
\dots ,a_{n}),
\end{equation}
where $\sigma=\{\{1,2,\dots,n_1\} \cdots\{n_1+n_2 + \dots +
n_{r-1}+1,\dots,n_1+n_2 + \cdots + n_{r}\}\}$.
\end{proposition}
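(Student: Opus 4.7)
The plan is to derive the formula by M\"obius inversion on $NC(n)$, using the standard order-isomorphism $\tau \mapsto \hat\tau$ from $NC(r)$ onto the interval $[\sigma, 1_n] \subseteq NC(n)$, where $\hat\tau$ fuses the consecutive blocks of $\sigma$ in the pattern dictated by $\tau$. Multiplicativity of $\EE_\tau$ and of the cumulants will handle the bookkeeping, and the defining identity of the M\"obius function will collapse the resulting double sum.

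The first step is to establish, for every $\tau \in NC(r)$, the partition-indexed moment expansion
\begin{equation*}
\EE_\tau(A_1, \dots, A_r) \;=\; \sum_{\substack{\pi \in NC(n)\\ \pi \vee \sigma \,\le\, \hat\tau}} \kappa^{\cB}_\pi(a_1, \dots, a_n).
\end{equation*}
When $\tau = 1_r$ this is just the moment-cumulant formula applied to $a_1 \cdots a_n = A_1 \cdots A_r$; for general $\tau$ one applies that formula block-by-block along $\tau$, and the $\pi$'s that appear are precisely those whose join with $\sigma$ lies below $\hat\tau$.

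The second step is M\"obius inversion in $NC(r)$,
\begin{equation*}
\kappa^{\cB}_r(A_1, \dots, A_r) \;=\; \sum_{\tau \in NC(r)} \mu(\tau, 1_r)\,\EE_\tau(A_1, \dots, A_r),
\end{equation*}
followed by substituting the expansion above and swapping the order of summation. The coefficient of $\kappa^{\cB}_\pi(a_1, \dots, a_n)$ becomes
\[
\sum_{\substack{\tau \in NC(r)\\ \hat\tau \,\ge\, \pi \vee \sigma}} \mu(\tau, 1_r) \;=\; \sum_{\substack{\rho \in NC(n)\\ \rho \,\ge\, \pi \vee \sigma}} \mu(\rho, 1_n),
\]
since $\mu(\tau, 1_r) = \mu(\hat\tau, 1_n)$ under the lattice isomorphism and since $\hat\tau \ge \pi \vee \sigma \ge \sigma$ forces $\hat\tau$ into $[\sigma, 1_n]$ automatically. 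The rightmost sum is the standard M\"obius identity and equals $\delta_{\pi \vee \sigma,\, 1_n}$, yielding (\ref{arguments}).

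The main obstacle is the lattice isomorphism $NC(r) \cong [\sigma, 1_n]$ and the attendant invariance of the M\"obius function. One has to verify that $\tau \mapsto \hat\tau$ lands inside the \emph{non-crossing} interval rather than merely in the analogous interval of all set partitions; this uses crucially that the blocks of $\sigma$ are consecutive in $[n]$, so a non-crossing pattern on $[r]$ translates to a non-crossing fusion on $[n]$ and conversely. Once this structural point is in place, the equality of M\"obius functions follows from the general principle that poset isomorphisms preserve $\mu$, and the remainder of the argument is routine.
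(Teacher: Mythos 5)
Your argument is correct, and since the paper supplies no proof of this proposition (it is quoted from \cite{Sp00}), the only comparison to make is with the cited source: your M\"obius-inversion argument through the canonical lattice isomorphism $NC(r)\cong[\sigma,1_n]$, giving $\EE_\tau(A_1,\dots,A_r)=\sum_{\pi\vee\sigma\le\hat\tau}\kappa^{\cB}_\pi(a_1,\dots,a_n)$ and then collapsing the double sum by $\sum_{\rho\ge\pi\vee\sigma}\mu(\rho,1_n)=\delta_{\pi\vee\sigma,1_n}$, is essentially the standard proof of that result (cf.\ the products-as-arguments theorem in \cite[Lect.~11]{ns} for the scalar case). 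The two points you single out are indeed the ones that need checking in the $\cB$-valued setting---that $\tau\mapsto\hat\tau$ stays non-crossing because the blocks of $\sigma$ are intervals of consecutive integers, and that the nested operator-valued moment--cumulant relations invert with scalar M\"obius coefficients thanks to multiplicativity and $\cB$-balancedness---and both hold, so the proof goes through.
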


\section{Diagonal decompositions of $R$-cyclic matrices}

\subsection{Diagonal Decompositions}
\begin{notation}
Let $A \in M_d(\cA)$ be a matrix. We shall write $A = A_0 +
A_1 + \cdots + A_{d-1}$ where the $A_i$'s are shown in
Figure \ref{fig:terms_1_2_g}.

\begin{figure}[h]
\[
A_0 = \begin{pmatrix}  a_{11} & 0      & \cdots &   0    \\
                         0    & a_{22} & \cdots &   0    \\
                       \vdots & \vdots &        & \vdots \\
                         0    &   0    & \cdots &  a_{dd}
       \end{pmatrix}
A_1 = \begin{pmatrix}    0    & a_{12} &    0    & \cdots &   0    \\
                         0    &   0    & a_{23}  & \cdots &   0    \\
                       \vdots & \vdots &         &        & \vdots \\
                         0    &   0    &         &    0   & a_{d-1,d} \\
                       a_{d1} &   0    &         &    0   &    0  
       \end{pmatrix}
\] \medskip
\[
A_k = \begin{pmatrix}
0            &   \cdots    & & a_{1,k+1} &      0      &   \cdots      &     0         \\
0            &   \cdots    & &    0 \rule[-.3\baselineskip]{0pt}{1.6em}   &  a_{2,k+2} &   \cdots      &     0         \\
\vdots       &             & & \vdots &  \vdots     &  \ddots       &   \vdots      \\
0            &             & &        &             &               & a_{d-k,d}  \\
a_{d-k+1,1}  &            & &        &             &   \cdots      &     0         \\
             & \ddots      & &       &             &   \cdots      &     \vdots         \\
0            &     0       & a_{d,k} & \cdots &             &               &       0        \\
\end{pmatrix}
\]
\caption{\label{fig:terms_1_2_g} The first two terms in the
  diagonal decomposition of $A$ (top row), and the $k^{th}$
  term (second row).}
\end{figure}
We call this the \textit{diagonal decomposition} of $A$. We
interpret all subscripts modulo $d$, i.e. $A_k = A_l$ if $k
\equiv l$ $\pmod d$. The same assumption applies to the
indices of our matrices, namely $a_{ij} = a_{kl}$ whenever
$i \equiv k \pmod d$ and $j \equiv l \pmod d$. Since $d$
will remain fixed throughout this paper we shall write $i
\equiv j$ to mean $i \equiv j \pmod d$.
\end{notation}

\begin{notation} \label{def:xs-decomposition}.
Let $S$ be the matrix that cyclically permutes (backwards)
the standard basis of $\bC^d$,
\[
S = \begin{pmatrix}
0      &    1   &    0   & 0 & \cdots & 0 \\
0      &    0   &    1   &    & \cdots & 0 \\
\vdots & \vdots &  \ddots & \ddots  & \vrule width0pt depth 1em height 1em & \vdots \\
0      &    0   & \cdots &   &   0    &   1  \\
1      &    0   & \cdots &   &   0    &   0  \\
\end{pmatrix},
\]
and for $\omega = \exp(2\pi i/d)$, let
\[
\bD = \begin{pmatrix}
    1  &    0   &    0     &  \cdots & 0\\
    0  & \omega &    0     &  \cdots & 0\\
    0  &    0   & \omega^2 &  \cdots & 0 \\
\vdots & \vdots & \vdots   &  \vdots & \vdots \\
    0  &    0   &    0     &    0    & \omega^{d-1} \\
\end{pmatrix} .   
 \]
\end{notation}

Notice that the conditional $\E$ the expectation onto the diagonal
matrices with entries from $\cA$ may be written as
\begin{align}\label{eq:expectation}
\E(X) &= d^{-1}(X + \bD X\bD ^{-1} + \cdots + \bD ^{d-1}X \bD ^{-(d-1)}), 
\end{align}
and we have the commutation relations
\begin{align}
S^k\bD ^l &= \omega^{kl} \bD ^l S^k  \mbox{\ and\ } \bD ^l S^k = \omega^{-kl}  S^k\bD ^l. \notag
\end{align}

\begin{lemma}\label{lemma:matrix_entries}
Let $X =(x_{ij})$ be a $d \times d$ matrix. Then
$(XS)_{ij} = x_{i, j-1}$, $(SX)_{ij} = x_{i+1, j}$,
  $(XS^{-1})_{ij} = x_{i, j+1}$, and $(S^{-1}X)_{ij} =
  x_{i-1, j}$.  $\E(XS^k) = \E(S^{-k}X^t)$.
\end{lemma}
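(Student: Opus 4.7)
The first four identities are direct unpacking of matrix multiplication. The matrix $S$ has entries $S_{ij} = \delta_{j, i+1}$ with indices read modulo $d$, and $S^{-1}$ has entries $(S^{-1})_{ij} = \delta_{j, i-1}$. So for any matrix $X$,
\[
(XS)_{ij} = \sum_{k} x_{ik} S_{kj} = \sum_k x_{ik}\delta_{j,k+1} = x_{i,j-1},
\]
and the other three formulas are obtained identically. I would simply state these four as a one-line computation each.

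For the final claim $\E(XS^k) = \E(S^{-k}X^t)$, the plan is to iterate the first identities to obtain
\[
(XS^k)_{ij} = x_{i,j-k}, \qquad (S^{-k}X^t)_{ij} = (X^t)_{i-k,j} = x_{j,i-k},
\]
and then simply read off the diagonal. Setting $j=i$ in both formulas gives
\[
(XS^k)_{ii} = x_{i,i-k} = (S^{-k}X^t)_{ii}.
\]
Since $\E$ is the conditional expectation onto $\mathcal{D}(\cA)$, which depends only on the diagonal entries of its argument, the equality of diagonal entries yields $\E(XS^k) = \E(S^{-k}X^t)$.

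There is no real obstacle here — the statement is a bookkeeping lemma about how the cyclic shift $S$ interacts with the transpose on the diagonal, and the only thing one must be careful about is interpreting all indices modulo $d$, as stipulated in the notation. I would also mention, for clarity, that the identity can be rewritten as $\E(XS^k) = \E((XS^k)^t)^t$-style symmetry after observing that $(S^{-k}X^t) = (XS^k)^t$ up to the transposition; indeed $(XS^k)^t = (S^k)^t X^t = (S^t)^k X^t = (S^{-1})^k X^t = S^{-k} X^t$, using $S^t = S^{-1}$. This gives an even slicker proof: $\E(Y) = \E(Y^t)$ for any $Y$ (since the diagonals coincide), applied to $Y = XS^k$.
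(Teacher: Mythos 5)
Your proposal is correct and, in essence, identical to the paper's proof: the first four identities come from $S_{ij}=\delta_{j,i+1}$ (indices mod $d$), and your closing observation — that $(XS^k)^t=S^{-k}X^t$ via $S^t=S^{-1}$ together with $\E(Y)=\E(Y^t)$ — is exactly the argument the paper uses for the last equality. The diagonal-entry computation you give first is just a more explicit version of the same fact, so nothing further is needed.
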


\begin{proof}
The first four equalities follow from the fact that $S_{ij}
= 1$ when $j \equiv i +1$ and $0$ otherwise. The last
equality follows from the fact that for any matrix $X$ we
have $\E(X^t) = \E(X)$ and $S^{-1} = S^t$.\
\end{proof}

Let $X = X_0 + X_1 + \cdots + X_{d-1}$ be the diagonal
decomposition of $X$.  In matrix notation $(X_k)_{ij} =
x_{ij}$ if $j \equiv i + k$ and $0$ otherwise. Recall that
here $i \equiv j$ means equivalent modulo $d$, the size of
the matrices.

\begin{lemma}\label{lemma:3}
$X_k = \E(XS^{-k})S^k = S^k \E(S^{-k}X)$.
\end{lemma}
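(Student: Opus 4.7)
I would prove both equalities by a direct entry-wise computation, taking Lemma~\ref{lemma:matrix_entries} as the basic tool. Iterating the identity $(XS^{-1})_{ij} = x_{i,\,j+1}$ (indices mod $d$) gives $(XS^{-k})_{ij} = x_{i,\,j+k}$, so $\E(XS^{-k})$ is the diagonal matrix whose $i$-th diagonal entry is $x_{i,\,i+k}$. Right-multiplication by $S^k$ is, by iteration of $(YS)_{ij} = y_{i,\,j-1}$, a cyclic shift of columns: $(\E(XS^{-k})S^k)_{ij} = \E(XS^{-k})_{i,\,j-k}$, which equals $x_{i,\,i+k}$ when $j \equiv i + k \pmod d$ and $0$ otherwise. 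Under the paper's convention, $x_{i,\,i+k} = x_{ij}$ whenever $j \equiv i + k \pmod d$, so the result coincides with $X_k$.

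For the second equality $X_k = S^k\E(S^{-k}X)$ I would run the symmetric computation: from $(S^{-k}X)_{ij} = x_{i-k,\,j}$, $\E(S^{-k}X)$ is diagonal with $(i,i)$ entry $x_{i-k,\,i}$, and left-multiplication by $S^k$ shifts rows via $(S^k Y)_{ij} = y_{i+k,\,j}$, so $(S^k\E(S^{-k}X))_{ij} = x_{i,\,i+k}$ exactly when $j \equiv i + k \pmod d$. Again this is $X_k$.

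A uniform alternative proof would expand $\E$ via the averaging formula~\eqref{eq:expectation} and push the outer factor $S^{\pm k}$ through $\bD^l$ using $S^k\bD^l = \omega^{kl}\bD^l S^k$. Both $\E(XS^{-k})S^k$ and $S^k\E(S^{-k}X)$ collapse to the common expression $d^{-1}\sum_{l=0}^{d-1} \omega^{kl}\,\bD^l X \bD^{-l}$, whose $(i,j)$ entry is $d^{-1}x_{ij}\sum_{l=0}^{d-1}\omega^{l(k+i-j)}$; the geometric sum is $d$ or $0$ according as $j \equiv i + k \pmod d$ or not, once again reproducing $X_k$ and simultaneously verifying both representations.

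The only real obstacle is bookkeeping with the mod-$d$ convention on subscripts, so that the ``wrap-around'' entries in the off-diagonal block of $X_k$ (such as $a_{d-k+1,1},\dots,a_{d,k}$ displayed in Figure~\ref{fig:terms_1_2_g}) are correctly subsumed by the single formula $x_{i,\,i+k}$ after reducing $i+k$ mod~$d$; no conceptual step beyond the two shift identities of Lemma~\ref{lemma:matrix_entries} (or, in the alternative route, the commutation $S^k\bD^l=\omega^{kl}\bD^l S^k$) is required.
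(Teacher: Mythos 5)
Your main argument is correct and is essentially the paper's own proof: a direct entry-wise computation showing that both $\E(XS^{-k})S^k$ and $S^k\E(S^{-k}X)$ have $(i,j)$ entry $x_{i,i+k}$ when $j\equiv i+k \pmod d$ and $0$ otherwise, i.e.\ they equal $X_k$. Your alternative route via the averaging formula \eqref{eq:expectation} and the commutation $S^k\bD^l=\omega^{kl}\bD^l S^k$ is also valid, but it essentially reproduces Lemma \ref{lemma:x-relation}, which the paper derives \emph{from} this lemma, so the entry-wise computation is the more natural primary proof.
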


\begin{proof}
\begin{align*}
(\E(XS^{-k})S^k)_{ij} &=
\begin{cases}
\E(XS^{-k}))_{ii} & j \equiv i + k \\
0 & j \not\equiv i + k
\end{cases} \\
&=
\begin{cases}
x_{i, i+k} & j \equiv i + k \\
0 & j \not\equiv i + k
\end{cases} \\
& = (X_k)_{ij},
\end{align*}
and
\begin{align*}
(S^k\E(S^{-k}X))_{ij} &=
\begin{cases}
\E(S^{-k}X))_{jj} & j \equiv i + k \\
0 & j \not\equiv i + k
\end{cases} \\
&=
\begin{cases}
x_{j-k, j} & i \equiv j - k \\
0 & j \not\equiv i + k
\end{cases} \\
& = (X_k)_{ij}.
\end{align*}
\end{proof}

\begin{notation}
Let $X \in M_d(\cA)$ and $Y_0, \dots, Y_{d-1}$ be the
\textit{diagonal decomposition} of $X^t$. Then by Lemmas
\ref{lemma:matrix_entries} and \ref{lemma:3}, $Y_k =
\E(X^tS^{-k})S^k = \E(S^k X)S^k$.
\end{notation}

\begin{lemma}\label{lemma:x-relation}
Let $X \in M_d(\cA)$. \\
i) If $X_0, \dots, X_{d-1}$ be the diagonal decomposition of $X$, then
\[
X_k = \frac{1}{d} \sum_{i=1}^d \omega^{ik} \bD ^i X \bD ^{-i}.
\]
ii) If $Y_0, \dots, Y_{d-1}$ be the
diagonal decomposition of $X^t$, then
\[
Y_k = 
S^k \bigg[\frac{1}{d} \sum_{i=1}^d \omega^{-ik} \bD ^i X \bD ^{-i}\bigg] S^k .
\]
\end{lemma}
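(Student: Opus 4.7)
The plan is to derive both identities by plugging the explicit formula for $\E$ into the representations of $X_k$ and $Y_k$ already established, then to use the $\bD$--$S$ commutation relations to simplify. All needed ingredients are on the page: Lemma \ref{lemma:3} gives $X_k=\E(XS^{-k})S^k$ and the notation immediately preceding the lemma gives $Y_k=\E(S^kX)S^k$; equation (\ref{eq:expectation}) expresses $\E$ as an average of conjugations by powers of $\bD$; and the commutation relation $S^k\bD^l=\omega^{kl}\bD^l S^k$ lets us pass $S$'s through $\bD$'s at the cost of a scalar root of unity.

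For (i), I would start from $X_k=\E(XS^{-k})S^k$ and expand $\E$ using (\ref{eq:expectation}) to obtain
\[
X_k=\frac{1}{d}\sum_{i=1}^d \bD^i X\, S^{-k}\bD^{-i}S^k.
\]
The one nontrivial step is the scalar identity $S^{-k}\bD^{-i}S^k=\omega^{ik}\bD^{-i}$, which is immediate from applying $S^k\bD^l=\omega^{kl}\bD^l S^k$ with $l=-i$ and then cancelling $S^k S^{-k}$. Substituting this back gives exactly $X_k=\tfrac{1}{d}\sum_{i=1}^d \omega^{ik}\bD^i X\bD^{-i}$.

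For (ii), I would repeat the same strategy starting from $Y_k=\E(S^kX)S^k$, which expands to $\tfrac{1}{d}\sum_i \bD^i S^k X\bD^{-i} S^k$. Here the $S^k$ sitting between $\bD^i$ and $X$ must be pushed outward to the left; the relation $\bD^i S^k=\omega^{-ki}S^k\bD^i$ accomplishes this and produces the factor $\omega^{-ik}$. The trailing $\bD^{-i}S^k$ is left alone, which gives the desired symmetric sandwich $S^k\bigl[\tfrac{1}{d}\sum_i \omega^{-ik}\bD^i X\bD^{-i}\bigr]S^k$.

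There is no real obstacle here—the only thing to watch is the bookkeeping of signs in the exponents of $\omega$ when moving $S^{\pm k}$ past $\bD^{\pm i}$; both parts reduce to a single application of the commutation relation together with the expansion of $\E$.
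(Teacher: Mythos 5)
Your proof is correct and follows essentially the same route as the paper: expand $\E$ via Eq.~(\ref{eq:expectation}) and use the $S$--$\bD$ commutation relation, with part (ii) matching the paper's computation exactly. The only (immaterial) difference is that in part (i) you start from $X_k=\E(XS^{-k})S^k$ rather than the paper's choice $X_k=S^k\E(S^{-k}X)$, both of which are supplied by Lemma~\ref{lemma:3} and lead to the same cancellation $S^{\mp k}\bD^{-i}S^{\pm k}=\omega^{\pm ik}\bD^{-i}$.
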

\begin{proof}
For $(i)$ we use Eq. (\ref{eq:expectation}) and Lemma \ref{lemma:3}
\begin{multline*} 
X_k  = S^k \E(S^{-k}X) =
S^k\frac{1}{d} \sum_{i=1}^d \bD ^i(S^{-k} X )\bD ^{-i}
= 
\frac{1}{d} \sum_{i=1}^d \omega^{ik}  \bD ^i X  \bD ^{-i}\\
\end{multline*}
Similarly for  $(ii)$,
\begin{multline*} 
Y_k  = \E(S^k X)S^k =
\frac{1}{d} \sum_{i=1}^d \bD ^i(S^k X )\bD ^{-i} S^k \\
= 
\frac{1}{d} \sum_{i=1}^d \omega^{-ik} S^k \bD ^i X  \bD ^{-i} S^k 
= 
S^k \bigg[\frac{1}{d} \sum_{i=1}^d \omega^{-ik} \bD ^i X \bD ^{-i}\bigg] S^k.
\end{multline*}
\end{proof}

\subsection{Freeness over  $M_d(\bC)$ }

From now on we will assume that $X \in M_d(\cA)$, is free
from $M_d(\bC)$ over $\mathcal{D}$. The free cumulants
$\tilde\kappa_n:M_d(\cA)\to \mathcal{D}$ refer to operator
valued free cumulants over the algebra $\mathcal{D}$,
i.e. free cumulants with respect to $\tilde{\phi}.$

The following simple observation will be the beginning of our analysis.

\begin{lemma}\label{lemma:freeness S X}
If $X \in M_d(\cA)$, is free from $M_d(\bC)$ over
$\mathcal{D}$. Then $S$ is free from the family
$\{X_i\}^d_{i=1}$ over $\mathcal{D} $.
\end{lemma}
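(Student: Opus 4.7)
The plan is to observe that each $X_k$ lies in the algebra generated by $X$ together with $\cD$, and then invoke the passage of freeness over $\cD$ to subalgebras.

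First I would use Lemma \ref{lemma:x-relation}(i), which gives
\[
X_k = \frac{1}{d}\sum_{i=1}^d \omega^{ik}\, \bD^i X \bD^{-i}.
\]
Since $\bD$ is a diagonal scalar matrix, $\bD \in \cD$, and the scalar factors $\omega^{ik}$ also lie in $\cD$. Hence each $X_k$ is a $\cD$-linear combination of $\cD$-conjugates of $X$, so $X_k \in \mathrm{alg}\langle X, \cD\rangle$ for every $k$. In particular,
\[
\mathrm{alg}\langle X_1, \dots, X_d, \cD\rangle \subseteq \mathrm{alg}\langle X, \cD\rangle.
\]

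Next I would observe that $S \in M_d(\bC)$ and $\cD \subseteq M_d(\bC)$, so $\mathrm{alg}\langle S, \cD\rangle \subseteq M_d(\bC)$. By hypothesis $X$ is free from $M_d(\bC)$ over $\cD$, which by definition means $\mathrm{alg}\langle X, \cD\rangle$ is free from $M_d(\bC)$ over $\cD$. Freeness over $\cD$ restricts to any pair of subalgebras each containing $\cD$: if $\mathrm{alg}\langle X, \cD\rangle$ and $M_d(\bC)$ have all mixed $\cD$-cumulants vanishing, then so do any subalgebras of them containing $\cD$. Applying this to the two subalgebras $\mathrm{alg}\langle X_1, \dots, X_d, \cD\rangle \subseteq \mathrm{alg}\langle X, \cD\rangle$ and $\mathrm{alg}\langle S, \cD\rangle \subseteq M_d(\bC)$ yields the desired freeness of $S$ from $\{X_i\}_{i=1}^d$ over $\cD$.

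There is essentially no obstacle here: the lemma is a direct corollary of the formula in Lemma \ref{lemma:x-relation}(i), which packages the diagonal decomposition in terms of conjugation by elements of $\cD$, combined with the trivial fact that freeness with amalgamation descends to subalgebras. The whole content of the argument is the containment $X_k \in \mathrm{alg}\langle X, \cD\rangle$.
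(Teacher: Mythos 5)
Your proof is correct and follows essentially the same route as the paper's: Lemma \ref{lemma:x-relation}(i) places each $X_k$ in $\mathrm{alg}\langle X, \cD\rangle$ (since $\bD\in\cD$), and freeness of $X$ from $M_d(\bC)$ over $\cD$, restricted to the subalgebras $\mathrm{alg}\langle X_1,\dots,X_d,\cD\rangle$ and $\mathrm{alg}\langle S,\cD\rangle\subseteq M_d(\bC)$, gives the claim. You have merely spelled out the subalgebra-restriction step that the paper leaves implicit.
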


\begin{proof}
By Lemma \ref{lemma:x-relation}, for all $i$, $X_i$ is in
the algebra generated by $X$ and $\mathcal{D}$ and thus
since and $X$ is free from $M_d(\bC)$ over $\mathcal{D}$,
then the $\{X_i\}^d_{i=1}$ is also free from $S\in M_d(\bC)$
over $\mathcal{D}$.
\end{proof}

\begin{lemma} \label{(off diagonal)}
Let $M_0, M_1,M_2, \dots, M_{d-1}$ be the diagonal
decomposition of $M \in M_d(\mathcal{A})$.

Then for $i_1, \dots, i_r \in \{0, 1, \dots, d-1\}$ and
diagonal matrices $D_1,\dots,\ab D_r$, then
\[
\tilde\kappa_r(M_{i_1}D_1, \dots, M_{i_r}D_r)=0, \]
whenever  $ i_1 + \cdots + i_r \not\equiv 0.$
\end{lemma}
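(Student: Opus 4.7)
The plan is to exploit the formula from Lemma \ref{lemma:x-relation}(i), which gives $M_k = \frac{1}{d}\sum_{j=1}^d \omega^{jk} \bD^j M \bD^{-j}$, by expanding the cumulant multilinearly and then detecting a cyclic-group symmetry in the resulting sum. Substituting, we get
\[
\tk_r(M_{i_1}D_1,\dots,M_{i_r}D_r) = \frac{1}{d^r}\sum_{j_1,\dots,j_r=1}^d \omega^{j_1 i_1 + \cdots + j_r i_r}\,
\tk_r\bigl(\bD^{j_1} M \bD^{-j_1} D_1,\dots,\bD^{j_r} M \bD^{-j_r} D_r\bigr).
\]
Call $C(j_1,\dots,j_r)$ the cumulant inside the sum. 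The crux of the proof is to show that $C$ is invariant under the diagonal shift $(j_1,\dots,j_r) \mapsto (j_1+a,\dots,j_r+a)$ for every integer $a$.

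To establish this shift-invariance, write $\bD^{j_k+a} M \bD^{-j_k-a} D_k = \bD^a(\bD^{j_k} M\bD^{-j_k})\bD^{-a}D_k = \bD^a(\bD^{j_k} M\bD^{-j_k})D_k\bD^{-a}$, using that $\bD^{-a}\in \cD$ commutes with $D_k\in\cD$. Since $\tk_r$ is $\cD$-balanced, all the internal factors $\bD^{-a}\bD^a = 1$ between consecutive arguments cancel, leaving only a $\bD^a$ on the extreme left and a $\bD^{-a}$ on the extreme right. The value $\tk_r(\cdots)$ lies in $\cD$ and therefore commutes with $\bD^a$, so
\[
C(j_1+a,\dots,j_r+a) = \bD^a\, C(j_1,\dots,j_r)\,\bD^{-a} = C(j_1,\dots,j_r).
\]

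Given this invariance, reparametrize the sum by the substitution $j_k' = j_k+a$; the character $\omega^{j_1 i_1+\cdots+j_r i_r}$ picks up a global factor of $\omega^{-a(i_1+\cdots+i_r)}$, while $C$ is unchanged. Therefore
\[
\tk_r(M_{i_1}D_1,\dots,M_{i_r}D_r) = \omega^{-a(i_1+\cdots+i_r)}\, \tk_r(M_{i_1}D_1,\dots,M_{i_r}D_r)
\]
for every $a\in\{1,\dots,d\}$. When $i_1+\cdots+i_r \not\equiv 0 \pmod d$, some $a$ gives $\omega^{-a(i_1+\cdots+i_r)}\ne 1$, forcing the cumulant to vanish, which is what we want.

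The only non-routine step is spotting the shift symmetry, i.e.\ that conjugation by $\bD^a$ acts trivially on both sides of the cumulant once one uses $\cD$-bilinearity to absorb the inner $\bD^{\pm a}$'s and the fact that $\tk_r$ takes values in $\cD$; the rest is bookkeeping on a finite Fourier sum over $\bZ/d\bZ$. Notably, the freeness assumption on $X$ from $M_d(\bC)$ over $\cD$ is not used here — the lemma rests purely on the $\cD$-balancedness of operator-valued cumulants and the algebraic identity of Lemma \ref{lemma:x-relation}(i).
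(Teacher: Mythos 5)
Your proof is correct, but it takes a genuinely different route from the paper. The paper proves the lemma combinatorially: it expands $\tilde\kappa_r$ via the moment--cumulant (M\"obius) formula over $NC(r)$ and observes that when $i_1+\cdots+i_r\not\equiv 0$, every $\pi$ has at least one block whose indices do not sum to $0$ modulo $d$; the product of the corresponding $M_{i_b}D_b$'s (even with the nested diagonal values inserted) lives on an off-diagonal band, so $\tilde\phi$ of it vanishes and the whole term dies. You instead expand each $M_{i_k}$ through the Fourier identity of Lemma \ref{lemma:x-relation}(i), use $\bC$-multilinearity, and kill the character sum by showing the inner cumulant $C(j_1,\dots,j_r)$ is invariant under the simultaneous shift $j_k\mapsto j_k+a$ --- which is legitimate because $\bD^a\in\cD$, the cumulant is $\cD$-balanced so the interior $\bD^{-a}\bD^a$ factors cancel, and the value lies in the commutative algebra $\cD$, so the outer conjugation by $\bD^a$ acts trivially. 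Both arguments are sound and neither uses the freeness of $X$ from $M_d(\bC)$ (the paper's proof doesn't either, despite the lemma's placement). The paper's proof is more elementary, needing only the band structure of the diagonal decomposition and the vanishing of $\tilde\phi$ on off-diagonal bands, and it avoids any appeal to balancedness; yours is slicker, fits the cyclic-group theme of the paper (it is exactly the averaging over the $\bD$-action highlighted in Remark \ref{rem:cyclic_group}), and sidesteps the mild bookkeeping about nested blocks in the operator-valued moment--cumulant formula, at the cost of invoking the $\cD$-balancedness and $\cD$-valuedness of $\tilde\kappa_r$ explicitly.
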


\begin{proof}

This follows from cumulant moment formula. Indeed, let $\mu$
be the M\"obius function for $NC(r)$ (see
\cite[Lect. 9]{ns}) then
\[
\tilde\kappa_r(M_{i_1}D_1, \dots, M_{i_r}D_r)= \kern-1.0em
\sum_{\pi\in NC(r)} \kern-0.8em
\mu(0_n, \pi)(\tilde\phi)_{\pi}(M_{i_1}D_1,M_{i_2}D_2,\cdots,M_{i_r}D_r).\]
Suppose $i_1+i_2+\cdots + i_r \not\equiv 0$. Then
for each partition $\pi$ there is at least one block
$V=\{b_1, \dots ,b_s\}$ such that
$i_{b_1}+i_{b_2}+\cdots+i_{b_s} \not\equiv 0$ and thus
$\tilde\phi(M_{i_{b_1}}D_{b_1}\cdots M_{i_{b_s}}D_{b_s})=0$.
\end{proof}

\begin{corollary}  \label{lemma:Y-cumulants}
Let $X \in M_d(\mathcal{A})$  and let 

\[\tilde X_i =X_{d-i}= \frac{1}{d} \sum_{l=1}^d \omega^{-il} \bD^l X \bD^{-l}. 
\]
Then
for $i_1, \dots, i_r \in [n]$ we have for $D_1, \dots, D_r \in \cD$
\[
\tilde\kappa_r(\tilde X_{i_1}D_1, \dots, \tilde X_{i_r}D_r) =0
\]
whenever  $ i_1 + \cdots + i_r \not\equiv 0 \pmod n.$

\end{corollary}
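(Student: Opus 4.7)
The plan is to observe that the corollary is a direct relabelling of Lemma \ref{(off diagonal)} via the identity $\tilde X_i = X_{d-i}$, so no new work is needed beyond matching indices. First I would verify the identification $\tilde X_i = X_{d-i}$: by Lemma \ref{lemma:x-relation}(i) we have $X_{d-i} = \frac{1}{d}\sum_{l=1}^d \omega^{l(d-i)} \bD^l X \bD^{-l}$, and since $\omega^{ld} = 1$ this equals $\frac{1}{d}\sum_{l=1}^d \omega^{-li} \bD^l X \bD^{-l}$, which is exactly the definition of $\tilde X_i$ given in the statement.

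Next I would invoke Lemma \ref{(off diagonal)} applied to the tuple $(X_{j_1}D_1, \dots, X_{j_r}D_r)$ with $j_k := d - i_k$. That lemma gives vanishing whenever $j_1 + \cdots + j_r \not\equiv 0 \pmod d$. The key arithmetic observation is
\[
j_1 + \cdots + j_r \;=\; rd - (i_1 + \cdots + i_r) \;\equiv\; -(i_1 + \cdots + i_r) \pmod d,
\]
so the non-vanishing condition $j_1 + \cdots + j_r \equiv 0$ is equivalent to $i_1 + \cdots + i_r \equiv 0 \pmod d$. Hence $i_1 + \cdots + i_r \not\equiv 0$ forces $\tilde\kappa_r(\tilde X_{i_1}D_1, \dots, \tilde X_{i_r}D_r) = 0$, as desired.

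There is essentially no obstacle here; the only thing to watch is the modulus convention. The statement of the corollary writes $[n]$ and $\pmod n$, but in context these should read $[d]$ and $\pmod d$ (the index set of the diagonal decomposition), matching the convention of Lemma \ref{(off diagonal)}; with this reading the argument above is complete.
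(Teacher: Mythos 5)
Your argument is correct and is essentially the paper's own proof: both reduce the corollary to Lemma \ref{(off diagonal)} via the identification $\tilde X_{i} = X_{d-i}$ and the observation that $\sum_k (d-i_k) \equiv -\sum_k i_k \pmod d$, so the non-vanishing conditions match. Your remark that the statement's ``$[n]$'' and ``$\pmod n$'' should be read as ``$[d]$'' and ``$\pmod d$'' is also the intended reading.
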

\begin{proof}
Indeed if $ i_1 + \cdots + i_r \not \equiv 0$ then $d- i_1 + \cdots +d- i_r \not \equiv 0$ and hence
$\tilde\kappa_r(\tilde X_{i_1}D_1, \dots, \tilde X_{i_r}D_r)=0 =\tilde\kappa_r(X_{d-i_1}D_1, \dots, X_{d-i_r}D_r)=0 $ by Lemma
\ref{(off diagonal)}.
\end{proof}

\begin{lemma}\label{lemma:S-cumulants}
Let $S$ be as above.  Then for all $D_1,.\dots,D_{2r}\in \mathcal{D}$.

\medskip\noindent
$i)$
$\tilde\kappa_r(S^{i_1}D_1,S^{i_2}D_2,\cdots,S^{i_r}D_r)=0$
unless $i_1+i_2+\cdots + i_r \equiv 0 $

\medskip\noindent
$ii)$
$\tilde\kappa_{2r}(S^{i_1}D_1,S^{-i_1}D_2,\cdots,S^{i_r}D_{2r-1},S^{-i_r}D_{2r})=0$,
and \\ $\tilde\kappa_{2r}(S^{-i_r}D_{2r},S^{i_1}D_1,S^{-i_1}D_2,\cdots,S^{i_r}D_{2r-1})=0$
unless $i_1 \equiv i_2 \equiv \cdots \equiv i_r$.
\end{lemma}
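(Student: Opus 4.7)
My plan is as follows. For part (i), the key algebraic fact is that $S^k D = (S^k D S^{-k})\,S^k$ with $S^k D S^{-k} \in \cD$, so iteratively
\[
S^{i_1}D_1 \cdot S^{i_2}D_2 \cdots S^{i_r}D_r \;=\; E \cdot S^{i_1+\cdots+i_r}
\]
for some $E \in \cD$. Since $\tilde\phi(E \cdot S^K) = 0$ whenever $K \not\equiv 0 \pmod d$ (the matrix has zero diagonal entries), the same reasoning applied to any block of a partition gives $\tilde\phi_\pi = 0$ for $\pi \in NC(r)$ unless every block of $\pi$ has exponent sum $\equiv 0 \pmod d$. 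If $i_1 + \cdots + i_r \not\equiv 0$, no partition can have all block sums vanish (they add to the total), so every $\tilde\phi_\pi$ is zero and M\"obius inversion yields $\tilde\kappa_r = 0$.

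For part (ii), I first reduce to the case $D_j = I$. Using $\cD$-balancedness of the cumulant together with $D S^k = S^k (S^{-k} D S^k)$ and $S^{-k} D S^k \in \cD$, I iteratively shuffle each $D_j$ into the next argument, absorbing all the diagonals into a single outer factor:
\[
\tilde\kappa_{2r}(S^{i_1}D_1, S^{-i_1}D_2, \ldots, S^{-i_r}D_{2r}) \;=\; \tilde\kappa_{2r}(S^{i_1}, S^{-i_1}, \ldots, S^{i_r}, S^{-i_r}) \cdot E
\]
for some $E \in \cD$. With $D_j = I$, M\"obius inversion combined with the block-sum observation from part (i) gives
\[
\tilde\kappa_{2r}(S^{i_1}, \ldots, S^{-i_r}) \;=\; I \cdot \sum_{\pi \in \mathcal V} \mu_{NC}(\pi, 1_{2r}),
\]
where $\mathcal V \subseteq NC(2r)$ consists of those $\pi$ all of whose blocks have $\delta$-sum $\equiv 0 \pmod d$, under the assignment $\delta_{2j-1} = i_j$ and $\delta_{2j} = -i_j$.

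The combinatorial core is then that this M\"obius sum vanishes unless $i_1 \equiv \cdots \equiv i_r \pmod d$. Since merging blocks preserves the zero-sum property, $\mathcal V$ is an upper set in the non-crossing lattice, so it decomposes into principal intervals $[\pi_0, 1_{2r}]$ indexed by its minimal elements; using $\sum_{\pi \ge \pi_0} \mu_{NC}(\pi, 1_{2r}) = \delta_{\pi_0, 1_{2r}}$ and inclusion--exclusion, the sum reduces to a contribution only when some join of minimal elements of $\mathcal V$ equals $1_{2r}$, and I would show this forces the $i_j$'s to be congruent. The second, cyclically shifted cumulant is handled by the same reduction with shifted indices. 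The main obstacle is the explicit analysis of the minimal elements of $\mathcal V$ and their joins; an appealing alternative is induction on $r$ via the cumulant--product formula (\ref{arguments}) applied to the pair products $T_j := (S^{i_j}D_{2j-1})(S^{-i_j}D_{2j}) \in \cD$, which yield $\tilde\kappa_r(T_1, \ldots, T_r) = 0$ for $r \ge 2$ and express $\tilde\kappa_{2r}$ as a sum of strictly smaller cumulants to which part (i) and the inductive hypothesis apply.
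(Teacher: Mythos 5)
Your part (i) is fine and is essentially the paper's argument (the paper phrases it through Lemma \ref{(off diagonal)} applied to the all-ones matrix, but the computation is the same), and in part (ii) your preliminary reductions are also sound: pushing each $D_j$ rightwards through the powers of $S$ does absorb all diagonals into one right factor, and with trivial diagonals the M\"obius inversion does give $\tilde\kappa_{2r}(S^{i_1},\dots,S^{-i_r})=\big(\sum_{\pi\in\mathcal{V}}\mu(\pi,1_{2r})\big)I$, where $\mathcal{V}$ is the up-set of non-crossing partitions all of whose blocks have exponent sum $\equiv 0$. The gap is in the combinatorial core. Your criterion --- that after inclusion--exclusion over the minimal elements of $\mathcal{V}$ there is a contribution only when some join of minimal elements equals $1_{2r}$, and that such a full join forces $i_1\equiv\cdots\equiv i_r$ --- is false. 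Take $d=3$, $r=4$, $(i_1,i_2,i_3,i_4)=(1,2,1,2)$, so the exponent string is $(1,-1,2,-2,1,-1,2,-2)$. Then $\pi_2=\{\{1,6\},\{2,5\},\{3,4\},\{7,8\}\}$ and $\pi_3=\{\{1,2\},\{3,8\},\{4,7\},\{5,6\}\}$ are non-crossing pairings whose blocks all have exponent sum exactly $0$, so they are minimal elements of $\mathcal{V}$ (any refinement creates singletons, which have nonzero sum), and their join in $NC(8)$ is $1_8$ because the two connected components $\{1,2,5,6\}$ and $\{3,4,7,8\}$ cross --- yet the $i_j$ are not all congruent mod $3$. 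The M\"obius sum does vanish here, as the Lemma asserts, but only through signed cancellation among several subsets of minimal elements whose join is $1_8$ (e.g.\ $\{\pi_2,\pi_3\}$ against $\{\pi_1,\pi_2,\pi_3\}$ with $\pi_1$ the interval pairing); your argument attributes vanishing to the absence of any full join and therefore cannot produce the result, and you offer no mechanism to control these cancellations in general.

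Your fallback suggestion --- induction via the product formula \eqref{arguments} applied to the pair products $T_j=(S^{i_j}D_{2j-1})(S^{-i_j}D_{2j})\in\cD$ --- is the right direction and is close in spirit to the paper's proof, but as sketched it is also incomplete: if $\sigma$ is the full interval pair partition, the partitions $\pi$ with $\pi\vee\sigma=1_{2r}$ are plentiful, and their blocks need not carry the alternating pattern $S^{i},S^{-i},\dots$ demanded by the inductive hypothesis, so ``part (i) plus induction'' does not apply blockwise without further analysis. The paper sidesteps this by grouping only the \emph{first} pair, i.e.\ taking $\sigma=\{\{1,2\},\{3\},\dots,\{2r+2\}\}$; then the only $\pi$ with $\pi\vee\sigma=1_{2r+2}$ are $1_{2r+2}$ and the two-block partitions separating $1$ from $2$, and each of those is eliminated by part (i) together with the inductive hypothesis --- this is exactly why the Lemma also records the cyclically shifted cumulant, since the blocks that arise are of that shifted form. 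To repair your proof, either adopt this choice of $\sigma$ and carry out the two-block analysis, or supply a genuine argument for the signed cancellations in your M\"obius sum; as written, neither route is established.
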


\begin{proof}

$(i)$ Follows from Lemma \ref{(off diagonal)}, since $S^i=\mathbb{J}_i$ in the diagonal decomposition of the matrix $\mathbb{J}=(1)^d_{ij}$, i.e. all entries equal to $1$.

$(ii)$ We use induction on $r$. First we prove that
  $\tilde\kappa_4(S^kD_1,S^{-k}D_2, \ab S^l D_3,S^{-l} D_4)
  \ab = 0$ and  $\tilde\kappa_4(S^{-l} D_4,S^kD_1,S^{-k}D_2, \ab S^l D_3,)
  \ab = 0$ unless $l \equiv k \pmod d$. This will be the
  base of our induction. 

Indeed suppose $k \not\equiv l$ and
  $k \not\equiv 0 \pmod d$, by hypothesis there exists $D\in
  \mathcal D$ such $D=S^kD_1S^{-k}D_2$, by the formula for
  products as arguments \eqref{arguments} we have
$$0=\tilde\kappa_3(D,S^lD_3,S^{-l}D_4)=
\mathop{\sum_{\pi \in NC(4)}}_{\pi\vee\sigma = 1_4}
\tilde\kappa_\pi(S^kD_1,S^{-k}D_2,S^lD_3,S^{-l}D_4),$$
where $\sigma=\{\{1,2\}\{3\}\{4\}\}.$ 
Thus,
\begin{multline*}
0 =
\tilde\kappa_4(S^kD_1,S^{-k}D_2,S^l D_3,S^{-l} D_4) \\
\mbox{} +\tilde\kappa_2(S^kD_1\tilde\kappa_2(S^{-k}D_1,S^{l}D_3), S^{-l}D_4) \\ \mbox{} +\tilde\kappa_3(S^kD_1,\tilde\kappa_1(S^{-k}D_2)S^lD_3,S^{-l}D_4)\\
\mbox{} +
\tilde\kappa_1(S^kD_1)\tilde\kappa_3(S^{-k}D_2,S^lD_3,S^{-l}D_4).
\end{multline*}

By (\textit{i}), all of the terms in the second and third
line equal $0$, and thus $\tilde\kappa_4(S^kD_1,S^{-k}D_2,S^l D_3,S^{-l} D_4)=0.$
 
Similarly,
$$0=\tilde\kappa_3(S^{-l}D_4, D,S^lD_3)=
\mathop{\sum_{\pi \in NC(4)}}_{\pi\vee\rho = 1_4}
\tilde\kappa_\pi(S^{-l}D_4, S^kD_1,S^{-k}D_2,S^lD_3)$$
where $\rho=\{\{1\},\{2,3\}\{4\}\}.$ 
Thus,
\begin{multline*}
0 =
\tilde\kappa_4(S^{-l} D_4,S^kD_1,S^{-k}D_2,S^l D_3) \\
\mbox{} +\tilde\kappa_2(S^{-l} D_4,S^kD_1\tilde\kappa_2(S^{-k}D_1,S^{l}D_3)) \\ \mbox{} +\tilde\kappa_3(S^kD_1,\tilde\kappa_1(S^{-l} D_4,S^{-k}D_2)S^lD_3)\\
\mbox{} +
\tilde\tilde\kappa_3(S^{-l}D_4\kappa_1(S^kD_1),S^{-k}D_2,S^lD_3).
\end{multline*}

Again, by (\textit{i}), all of the terms in the second and third
line equal $0$, and thus $\tilde\kappa_4(S^{-l} D_4,S^kD_1,S^{-k}D_2,S^l D_3)=0,$ which finishes the base of induction.

Now, we assume that (\textit{ii}) is true for all $1\leq
t\leq r$ and prove that it also holds for $r+1$.  That is,
we will prove that
\[
\tilde\kappa_{2r+2}(S^kD_1,S^{-k}D_2,
S^{i_1}D_3,S^{-i_1}D_4,\cdots,S^{i_r}D_{2r},S^{-i_r}D_{2r+2})=
0,
\]
unless $k \equiv i_1 \equiv i_2 \equiv \cdots \equiv i_r$. 

Again we write $D=S^kD_1S^{-k}D_2$, and use the formula for
products as arguments \eqref{arguments}, yielding
\begin{multline*}
0=\tilde\kappa_{2r+1}(D, S^{i_1}D_3,S^{-i_1}D_4,\cdots,S^{i_r}D_{2r+1},S^{-i_r}D_{2r+2})\\ 
= \kern-1em
\mathop{\sum_{\pi\in NC(2r+2)}}_{\pi\vee\sigma = 1_{2 r + 2}} \kern-1em
 \tilde\kappa_{\pi}(S^kD_1,S^{-k}D_2, S^{i_1}D_3,S^{-i_1}D_4,\cdots,S^{i_r}D_{2r+1},S^{-i_r}D_{2r+2})
\end{multline*} 
where $\sigma=\{\{1,2\},\{3\},\{4\},\dots,\{2r+2\}\}.$ If we
consider the partitions $\pi \in NC(2 r + 2)$ such that $\pi
\vee \sigma = 1_{2 r + 2}$ we have, apart from $1_{2 r +
  2}$, the collection $\pi_2, \dots , \pi_{2 r + 2}$ where
$\pi_i= \{ \{1, i+1, \dots, 2 r + 2\}, \{2, 3, \dots,
i\}\}$.  This is because $\pi \vee \sigma = 1_{2 r + 2}$
implies that $\pi$ has at most two blocks: one containing the element 1
and one containing the element 2. 

If  $i=2j$, then 
\begin{multline*}
\tilde\kappa_{\pi_{2j}}(S^kD_1, S^{-k}D_2, \dots,
S^{i_r}D_{2r-1}, S^{-i_r}D_{2r}) \\ =
\tilde\kappa_{2r-2j+1}(S^kD_1\Delta_j,S^{-i_j}D_{2j+1},\cdots,
S^{i_r}D_{2r+1},S^{-i_r}D_{2r+2}),
\end{multline*}
where $\Delta_{2j-1}=\tilde\kappa_{2j-1}(S^{-k}D_2,
S^{i_1}D_{3},S^{-i_1}D_4,\cdots,S^{i_j}
D_{2j})\in\mathcal{D}.$ Since $k\neq0$, then by (i), $\Delta_{2j-1}=0$. 

Second, if $j$ is odd, say $j=2j+1$ then 
\begin{multline*}
\tilde\kappa_{\pi_{2j+1}}(S^kD_1, S^{-k}D_2, \dots,
S^{i_r}D_{2r-1}, S^{-i_r}D_{2r}) \\ =
\tilde\kappa_{2r-2j}(S^kD_1\Delta_j,S^{-i_j}D_{2j+2},\cdots,
S^{i_r}D_{2r+1},S^{-i_r}D_{2r+2}),
\end{multline*}
where $\Delta_{2j}=\tilde\kappa_{2j}(S^{-k}D_2,
S^{i_1}D_{3},S^{-i_1}D_4,\cdots,S^{i_j}
D_{2j+1})\in\mathcal{D}.$

If it not the case that $k \equiv i_1 \equiv i_2 \equiv
\cdots \equiv i_r$, then the same condition will hold for at
least one block of $\pi_j$. Hence
$\tilde\kappa_{\pi_i}(S^kD_1, S^{-k}D_2, \dots,\ab
S^{i_r}D_{2r-1}, S^{-i_r}D_{2r})= 0$. This proves that the
cumulant $\tilde\kappa_{2r+2}(S^kD_1, \ab S^{-k}D_2,
S^{i_1}D_3,S^{-i_1}D_4,\cdots,S^{i_r}D_{2r},S^{-i_r}D_{2r+2})=
0$, as desired.
\end{proof}

\begin{definition}
Let $y_1, y_{-1} \in (\cA, \phi)$ a non-commutative
probability space. If for all $n$ and all $\epsilon_1,
\epsilon_2, \dots, \epsilon_n \in \{-1, 1\}$ we have
$\tilde\kappa_n( y_{\epsilon_1}, y_{\epsilon_2},\ab \dots ,
y_{\epsilon_n}) = 0$ unless $n$ is even and $\epsilon_i =
-\epsilon_{i+1}$ for $i =1, \dots, n-1$, we say that $\{y_1,
y_{-1} \}$ are a $R$-\textit{diagonal pair}, following 
\cite[Thm. 3.1]{ss01} (see also \cite{bd}).
\end{definition}

\begin{remark}
If $(\cA, \phi)$ is a $*$-probability space, then $y$ is
$R$-diagonal if and only if $\{y, y^*\}$ is an $R$-diagonal
pair.
\end{remark}

\begin{theorem}\label{main 1}
Suppose $X$ is free from $M_d(\bC)$ over $\cD$. Let $Y_0$, $Y_1$,
\dots, $Y_{d-1}$ be the diagonal decomposition of $X^t$.

For $d$ even, $Y_0, \{ Y_1, Y_{d-1}\}, \{Y_2, Y_{d-2}\},\ab
\dots, \{Y_{d/2-1}, Y_{d/2 + 1}\}, Y_{d/2}$ is a free family
over $\cD$ and $\{Y_i, Y_{d-i}\}$ is a $R$-diagonal pair for $i = 1,
\dots,\ab d/2-1$.

For $d$ odd, $Y_0, \{ Y_1, Y_{d-1}\}, \{Y_2, Y_{d-2}\},
\dots, \{Y_{(d-1)/2}, Y_{(d+1)/2}\}, $ is a free family over $\cD$ and
$\{Y_i, Y_{d-i}\}$ is a $R$-diagonal pair for $i = 1, \dots,
(d-1)/2$.
\end{theorem}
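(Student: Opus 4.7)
The plan is to express each mixed cumulant of the $Y_{k_j}$'s in terms of cumulants built from $S$ and from the $\tilde X_{k_j}$'s via the factorisation $Y_k=S^k\tilde X_k S^k$ (Lemma \ref{lemma:x-relation}(ii)), and then use freeness of $S$ from $\{\tilde X_k\}$ over $\cD$ together with the exponent-sum vanishing conditions already established to eliminate all but a very restricted family of terms. Throughout I will use that $\bD\in\cD$, so $\tilde X_k\in\mathrm{alg}(X,\cD)$, and hence $\{\tilde X_k\}_{k=0}^{d-1}$ is free from $M_d(\bC)$ (in particular from $S$) over $\cD$ by Lemma \ref{lemma:freeness S X}.

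\textbf{Expansion and separation.} Given an arbitrary cumulant $\tilde\kappa_r(Y_{k_1}D_1,\dots,Y_{k_r}D_r)$, I would substitute $Y_{k_j}D_j=S^{k_j}\cdot\tilde X_{k_j}\cdot(S^{k_j}D_j)$ and apply the products-as-arguments formula \eqref{arguments}. This rewrites the cumulant as a sum over $\pi\in NC(3r)$ with $\pi\vee\sigma=1_{3r}$, where $\sigma$ groups positions into consecutive triples, position $3j-1$ holding the $\tilde X$-factor and positions $3j-2,3j$ holding the $S$-factors. Freeness of $S$ and of the $\tilde X_k$'s over $\cD$ kills every $\pi$ with a block that mixes the $S$-positions $\{3j-2,3j\}_j$ with the $\tilde X$-positions $\{3j-1\}_j$, leaving only $\pi=\pi_S\sqcup\pi_X$. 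On each such $\pi$, Corollary \ref{lemma:Y-cumulants} forces every block of $\pi_X$ to have its $k_j$-subscripts summing to $0\pmod d$, while Lemma \ref{lemma:S-cumulants}(i) forces every block of $\pi_S$ to have its $S$-exponents summing to $0\pmod d$.

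\textbf{Case analysis and main obstacle.} For the freeness claim, suppose $\{k_1,\dots,k_r\}$ is not contained in a single pair $\{i,d-i\}$. The sum-to-zero constraint on $\pi_X$-blocks must be combined with the fact that each $\tilde X_{k_j}$ is sandwiched between two $S$-factors of the \emph{same} exponent $k_j$, so that the non-crossing condition tying $\pi_S$ to $\pi_X$ (together with $\pi\vee\sigma=1_{3r}$) propagates the balance of $\pi_X$-subscripts into a corresponding balance of $\pi_S$-exponents; any stray $k_j$ violating the pair grouping produces a $\pi_S$-block with a net $S$-exponent $\pm k_j\pm k_{j'}\not\equiv 0\pmod d$, killing the contribution. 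For the $R$-diagonal pair $\{Y_i,Y_{d-i}\}$ I would parameterise $k_j=\epsilon_j i$; a non-alternating sign sequence likewise produces a $\pi_S$-block in the alternating configuration of Lemma \ref{lemma:S-cumulants}(ii) whose exponents fail the equality $i_1\equiv\cdots\equiv i_r$, forcing vanishing. The main obstacle is making this propagation precise: the individual sum-to-zero conditions are themselves mild, and only the rigid interlacing of $\pi_S$ with $\pi_X$ inherited from the triple structure of $\sigma$ converts them into the strong freeness and $R$-diagonality conclusions. I expect the cleanest implementation to be an induction on $r$, with the base cases handled very much in the style of the induction in Lemma \ref{lemma:S-cumulants}(ii).
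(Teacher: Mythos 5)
Your setup is exactly the paper's: write $Y_k=S^k\tilde X_kS^k$, expand $\tilde\kappa_r(Y_{k_1}D_1,\dots,Y_{k_r}D_r)$ with the products-as-arguments formula over $\pi\in NC(3r)$ with $\pi\vee\sigma=1_{3r}$, use freeness of $S$ from the $\tilde X_k$'s over $\cD$ to split any contributing $\pi$ into $S$-blocks and $X$-blocks, and then invoke Corollary \ref{lemma:Y-cumulants} and Lemma \ref{lemma:S-cumulants}. But the part you label ``the main obstacle'' is precisely the content of the paper's proof, and you have not supplied it. The sum-to-zero conditions on individual blocks are, as you say, mild: an $S$-block can contain many exponents and can easily satisfy Lemma \ref{lemma:S-cumulants}(i) (e.g.\ $k_1+k_2+k_3\equiv 0$) without the indices lying in a single pair $\{i,d-i\}$, so your assertion that a stray $k_j$ ``produces a $\pi_S$-block with a net $S$-exponent $\pm k_j\pm k_{j'}\not\equiv 0$'' is not justified and is not how the vanishing actually arises. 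What is needed, and what the paper proves, is a structural statement about the blocks themselves: using $\pi\vee\sigma=1_{3r}$ (no $\sigma$-triples can be stranded between two elements of a block) together with Lemma \ref{lemma:S-cumulants}(i) and Corollary \ref{lemma:Y-cumulants}, one shows that the trailing $S$-factor at position $3j$ must be joined in $\pi$ to the leading $S$-factor at position $3(j+1)-2$, so every $S$-block has the form $\{3b_1,3b_1+1,3b_2,3b_2+1,\dots\}$; this forces $r$ even, gives $i_l\equiv -i_{l+1}$ between consecutive triples, and only then does Lemma \ref{lemma:S-cumulants}(ii) (whose alternating hypothesis is now met) yield that all the exponents in a block, and then in adjacent blocks of opposite type, agree up to sign, i.e.\ $i_1\equiv -i_2\equiv\cdots$. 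Deferring this to ``an induction on $r$'' without exhibiting it leaves the theorem unproved.

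A second, smaller gap: you feed the indices $k_j=0$ into the same uniform triple expansion. Since $S^0=1\in\cD$, the separation into $S$-blocks and $X$-blocks and the subsequent exponent bookkeeping do not apply verbatim to those positions, and the conclusion you must reach in that situation (a nonvanishing cumulant containing some $Y_0$ forces \emph{all} indices to be $0$) is not addressed by your sketch at all. The paper handles this by a separate case: it builds $\sigma$ with singleton blocks at the $Y_0$-positions and size-three blocks elsewhere, and then argues directly on the $X$-block containing an $X_0D$ entry (its neighbour must be the adjacent position, no $\sigma$-blocks can sit in between, and iterating shows there is a single $X$-block all of whose entries are $X_0D$'s). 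Some argument of this kind is required for the freeness statement, since mixed cumulants such as $\tilde\kappa_2(Y_0D_1,Y_iD_2)$ with $i\neq 0$ are exactly the ones that must be shown to vanish.
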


\begin{proof}
We write $Y_i = S^i \tilde{X}_i S^i$ for $i = 0, \dots, d -1$ with
$\tilde X_i$ as in Lemma \ref{lemma:x-relation}. Let $r > 1$ and
$i_1, \dots, i_r \in \{0, 1, \dots, d-1\}$ be given. We must
show that $$\tilde\kappa_r( Y_{i_1}D_1, \dots, Y_{i_r}D_r) = 0$$
unless either $i_1 = \cdots = i_r = 0$, or $r$ is even and
$i_l + i_{l+1} = d$ for $l=1 , \dots, r - 1$.

Suppose that for some $l_1, \dots ,l_k$ we have $i_{l_1} =
\cdots = i_{l_k} = 0$ and for all other $j$'s we have $i_j
\not = 0$. Let $s_j = |\{ m \mid 1 \leq m < j$ and $i_m = 0
\}|$ and $s = |\{ m \mid 1 \leq m \leq r$ and $i_m = 0
\}|$. Let $\sigma = \{ V_1, \dots, V_r\}$ be the interval
partition with $r$ blocks constructed as follows. $V_j = \{
3 j - 2 s_j -2\}$ if $i_j =0$ and $V_j = \{ 3 j - 2 s_j -2,
3 j - 2 s_j -1, 3 j - 2 s_j \}$ if $i_j \geq 1$. In other words, for each $k$ such that $ i_{l_k} = 0$,  $\sigma$ has a block of size $1$ and for each $k$ such that $ i_{l_k} \neq0$,  $\sigma$ has a block of size $3$. For
example, if $(i_1, i_2, i_3, i_4, i_5) = (0, 2, 0, 1, 2)$ we
have $\sigma = \{(1),(2,3,4),(5),(6,7,8),(9, 10, 11)\}$.

For $ 1 \leq m \leq 3r - 2s$, let us define $Z_m$ as
follows. If $m \in V_j$ and $i_j = 0$ then $Z_m = X_0D_j$. If
$m \in V_j$ and $i_j \geq 1$ then $Z_m = \tilde X_{i_j}$ if $m = 3
j - 2 s_j -1$ , $Z_m = S^{i_j}$ if $m =3 j -2 s_j -2$ and 
$Z_m = S^{i_j}D_j$ if $m = 3 j - 2 s_j $. In the example above $Z_1, \dots,
Z_{11}$ are $X_0D_1, S^2, X_2, S^2D_2, X_0D_3, S^1, X_1, S^1D_4,S^2, X_2, S^2D_5$ respectively. Then, by the formula for products as arguments \eqref{arguments}, we have
\[
\tilde\kappa_r(Y_{i_1}D_1, \dots, Y_{i_r}D_r)
=
\mathop{\sum_{\pi \in NC(3r-2s)}}_{\pi \vee \sigma = 1_{3r-2s}}
\tilde\kappa_\pi(Z_1, \dots, Z_{3r-2s} )
\]

The proof will consist in analyzing the non-vanishing terms in the formula above. Observe that by Lemma \ref{lemma:freeness S X}, $S$ and the $X_i$'s are free. Thus for $\tilde\kappa_\pi(Z_1, \dots, Z_{3r-2s} ) \not = 0$
we must have the blocks of either $\pi$ to consist of
$X$-blocks (only connecting $X$'s and $XD$'s) and $S$-blocks (only
connecting $S$'s and $SD$'s).

\

We shall break the proof into two cases. Case 1 is
when for some $l \in [r]$ we have $i_l = 0$, i.e. $s>0$. Case 2
is when $i_l > 0$ for all $l \in [r]$,  i.e. $s=0$.

\

\noindent \textbf{Case 1.} Mixed cumulant with at least one $Y_0$ as an entry,  i.e. $s>0$. 

\

Let $W$ be an $X$-block and let $b \in W$ be such that
$Z_b = X_0D$.  If $W$ is a singleton then the condition  $\pi
\vee \sigma = 1_{3r - 2s}$ will not be satisfied. So $W$ must have another element.

Without loss of generality, there is at least one element to the right of $b$ ( the following argument also applies if there is an element to the left).
Now, let $b' \in W$ be the next element to the right of $W$; it
must be a $X_i$, by freeness. Thus we may label the elements between $b$
and $b'$ as below. By Lemma \ref{lemma:S-cumulants} (\textit{i}) we have $2
i_{t_1} + \cdots + 2 i_{t_{k-1}} + i_{t_k} \equiv 0 \pmod
d$. 
\begin{center}
\begin{tikzpicture}
\node  at (1,1) {$\cdots  \mid X_0 D_{t_1-1} \mid \ds\mathop{\underbrace{S^{i_{t_1}},\tilde X_{i_{t_1}},S^{i_{t_1}}D_{t_1}  \mid  \cdots  \mid S^{i_{t_k}}}}_{},\tilde X_{i_{t_k}},S^{i_{t_k}}D_{t_k}  \mid \cdots$};
\draw (-3.00,1) -- (-3.00,0.5);
\draw (-4.00,0.5) -- (4.75, 0.5);
\draw (3.5,1) -- (3.5,0.5);
\node at (-3.00, 0.2) {$b$};
\node at (3.5, 0.2) {$b'$};
\end{tikzpicture}
\end{center}
By Corollary \ref{lemma:Y-cumulants} we must have
$i_{t_1} + \cdots + i_{t_{k-1}} \equiv 0 \pmod d$. Hence
$i_{t_k} \equiv 0 \pmod d$. Moreover, the blocks of $\sigma$
between $b$ and $b'$ will not be joined by $\pi$ to the
other blocks of $\sigma$ and required by the condition $\pi
\vee \sigma = 1_{3r - 2s}$.
Thus there cannot be any blocks
of $\sigma$ between $b$ and $b'$, and so $b'=b+1$. 

 Finally, if $b$ and $b'$ are the only elements of $W$, then again the condition $\pi\vee \sigma = 1_{3r - 2s}$ will not be satisfied. Proceeding in the same way we may deduce that there is only one $X$-block, with entries of the form $X_0D$. Hence $i_1 =
i_2 = \cdots = i_r = 0$. 

\

\noindent \textbf{Case 2.} Mixed cumulant with no $Y_0$ as an entry.

\

In this case, where $i_l \geq 1$ for $1 \leq l
\leq r$. Let $\sigma \in NC(3r)$ be the partition $\{(1, 2,
3), (4,5, 6), \dots, (3r -2,\ab 3r -1 , 3r)\}$. Then
\[
\tilde\kappa_r( Y_{i_1}D_1, \dots, Y_{i_r}D_r)
=
\mathop{\sum_{\pi \in NC(3r)}}_{\pi \vee \sigma = 1_{3r}}
\tilde\kappa_\pi(S^{i_1}, \tilde X_{i_1}, S^{i_1}D_1, \dots, S^{i_r}, \tilde X_{i_r}, S^{i_r}D_r)
\]
Suppose $\pi \in NC(3r)$ and $\tilde\kappa_\pi(S^{i_1}, \tilde X_{i_1},
S^{i_1}D_1, \dots, S^{i_r}, \tilde X_{i_r}, S^{i_r}D_r) \not = 0$. Again,
since $S$ if free over $\cD$ from the $X_i$'s, the blocks of $\pi$ must
either be $S$-blocks, or $X$-blocks.

We now consider the case of $S$-blocks. Let $b = 3 s_1$ and
consider $W$, the $S$-block of $\pi$ and such $b \in
W$. Since $i_{s_1}\not\equiv 0$, $b$ must be connected to
some other element. Let $b' \in W$ be the next point to the
right ( if $b$ is the last
element in $W$, then $b'$ is the smallest element of $W$ and a similar argument applies). Then there are $s_1 < \cdots < s_k$ such
that either $b' = 3s_k-2$ or $b' = 3s_k$.

First, suppose that $b' = 3 s_k$.

\smash{\begin{tikzpicture}
\node  at (1,1) {$\cdots , \tilde X_{i_{s_1}} , \ds S^{i_{s_1}} D_{s_1}\mid \underbrace{S^{i_{s_2}}, \cdots, S^{i_{s_{k-1}}} D_{s_{k-1}}  \mid S^{i_{s_k}} D_{s_k} ,  \tilde X_{i_{s_k}} }, S^{i_{s_k}} D_{s_k} \mid \cdots$};
\draw (-2.7,0.7) -- (-2.7,0.3);
\draw (-4.25,0.3) -- (7, 0.3);
\draw (5.5,0.7) -- (5.5,0.3);
\node at (-2.7, 0.0) {$b$};
\node at (5.5, 0.0) {$b'$};
\end{tikzpicture}}

Then by Lemma \ref{lemma:S-cumulants} (\textit{i}), $
2i_{s_2} + \cdots + 2i_{s_{k-1}} + i_{s_{k}}
\equiv 0 \pmod d$. By Corollary \ref{lemma:Y-cumulants} we
must have $i_{s_1} + \cdots + i_{s_{k}} \equiv 0 \pmod
d$. Hence $ i_{s_{k}} \equiv 0 \pmod d$, but this
case has been ruled out already.

Now, suppose $b' = 3 s_k-2$. If there are
any blocks of $\sigma$ between $b$ and $b'$ they will not be
able to joined to the others by $\pi$; contradicting our
assumption that $\pi \vee \sigma = 1_{3r}$. 

\noindent\kern-1.25em\smash{\begin{tikzpicture}
\node  at (1,1) {$\cdots  \mid  S^{i_{s_1}}, \tilde X_{i_{s_1}} , \ds S^{i_{s_1}} D_{s_1}\mid S^{i_{s_2}},\tilde X_{i_{s_2}}, \cdots , S^{i_{s_{k-1}}} D_{s_{k-1}}  \mid S^{i_{s_k}} ,  \tilde X_{i_{s_k}}, S^{i_{s_k}} D_{s_k} \mid \cdots$};
\draw (-2.5,0.7) -- (-2.5,0.3);
\draw (-4.25,0.3) -- (6, 0.3);
\draw (3.75,0.7) -- (3.75,0.3);
\node at (-2.5, 0.0) {$b$};
\node at (3.75, 0.0) {$b'$};
\end{tikzpicture}}\\
Thus we must have that $k = 2$ and $s_2 = s_1 + 1$.

Thus, any $b=3 s_1$ must be connected to $3s_1+1$.  A similar argument shows that any $b = 3s_1 +1$ must be connected to $3s_1$.

We thus have arrived at the following form for any $S$-block:
$\{ 3b_{1},\ab 3 b_{1}+1, 3b_{2},3b_{2}+1 , \dots  3b_{l}+1\}$$  \pmod {3r}$.  Notice that in particular this means that $r$ is even.

By looking at the elements between $3b_i+1$ and $3b_{i+1}$,
using Lemma \ref{lemma:S-cumulants} (\textit{i}), and
Corollary \ref{lemma:Y-cumulants} we may conclude that
$i_{b_{r}+1}\equiv -i_{b_{r+1}}$, for all $i=1,\dots,l$.
This implies by Lemma \ref{lemma:S-cumulants} (\textit{ii})
that $i_{b_{1}}\equiv i_{b_{2}}\equiv\cdots \equiv
i_{b_{n}}$.

We say a block is of type $k$ if $i_{b_1} = k$ and $0 \leq k
\leq d$.  Notice that the block next to $W$ (a block $W$ such
that $3b_i-2$ or $3b_i+3$ belongs to $W'$ ,for some $1\leq i
\leq l$) must have type $-k$. Thus implies $i_{1}\equiv
-i_{2}\equiv\cdots \equiv i_{r}$ which proves both claims.
\end{proof}

\subsection{Scalar case}

In this subsection we consider the case where $X \in M_d(\cA)$ is $\phi$-free from
$M_d(\bC)$.  This corresponds to uniform $R$-cyclic matrices. These are matrices such that the non-vanishing cumulants of the entries depend only on the length.  The relation with free compression by matrix units may be seen in \cite[Theorem 14.20]{ns}. The importance of them in random matrix theory is that they appear as limits of unitarily invariant random matrices.

The main difference from the $\mathcal{D}$-valued case is the analog of Lemma \ref{lemma:freeness S X}, whose proof needs a little more work in the scalar case.

\begin{lemma}  \label{corollary:s_x_freeness}
Suppose $X \in M_d(\cA)$ is $\phi$-free from
$M_d(\bC)$. Then $S$ is free from the family
$\{X_i\}^d_{i=1}$.
\end{lemma}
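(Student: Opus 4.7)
The plan is to mimic the proof of Lemma \ref{lemma:freeness S X}. There, one uses that $\{X_i\} \subseteq \mathrm{alg}(X,\mathcal{D})$ and that $X$ is $\mathcal{D}$-free from $M_d(\bC) \supseteq \mathrm{alg}(S)$ over $\mathcal{D}$ to conclude freeness of $\{X_i\}$ and $S$ over $\mathcal{D}$. In the scalar setting, $X$ is only $\phi$-free from $M_d(\bC)$ (not over $\mathcal{D}$), and the naive argument breaks because $\bD$ appears both in the definition of the $X_i$'s (so in $\mathrm{alg}(\{X_i\})$) and in $M_d(\bC) \supseteq \mathrm{alg}(S)$, while $\bD$ and $S$ are not $\tr$-free in $M_d(\bC)$ (for instance $\tr(\bD S\bD^{-1}S^{-1}) = \omega^{-1} \neq 0$).

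To overcome this, I would work directly with the scalar cumulants $\kappa_n$. The first step is to apply Lemma \ref{lemma:x-relation}(i) to write $X_i = \frac{1}{d}\sum_j \omega^{ji}\bD^j X\bD^{-j}$ and, by multilinearity of $\kappa_n$, reduce any mixed cumulant of $\mathrm{alg}(\{X_i\})$ and $\mathrm{alg}(S)$ to a sum of cumulants whose atomic arguments are either monomials $\bD^{a_0} X \bD^{a_1} X \cdots X \bD^{a_r}$ (coming from the $\mathrm{alg}(\{X_i\})$ factors) or powers $S^l$. One then applies the products-as-arguments formula \eqref{arguments} to expand these in turn into cumulants of the individual factors $X$, $\bD^a$ and $S^l$.

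At that point, scalar $\phi$-freeness of $X$ from $M_d(\bC) = \mathrm{alg}(\bD, S)$ forces every block of the non-crossing partition $\pi$ in the expansion to consist entirely of $X$-factors or entirely of $M_d(\bC)$-factors (i.e.\ $\bD$'s and $S$'s); this is completely analogous to the $X$-block / $S$-block dichotomy used in the proof of Theorem \ref{main 1}. The Fourier averagings $\frac{1}{d}\sum_j \omega^{ji}$ coming from the expansion of each $X_i$ then produce character sums in the summation indices that, combined with the commutation relation $\bD^j S^l = \omega^{-jl} S^l \bD^j$ moving all $\bD$'s out of the $S$-factors, must cancel whenever the cumulant is genuinely mixed.

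The main obstacle is the combinatorial bookkeeping in the last step: one has to track how the phase factors $\omega^{ji}$ from several $X_k$-expansions combine with the phases produced by the $M_d(\bC)$-block cumulants, and verify that the surviving character sums force the weight constraints $\sum_i k_i + \sum_j l_j \equiv 0 \pmod d$ to fail in the mixed case. This analysis is essentially the scalar analogue of the arguments in Lemma \ref{lemma:S-cumulants} and the proof of Theorem \ref{main 1}, just carried out at the level of $\kappa_n$ instead of $\tilde\kappa_n$, which is why the paper says it requires ``a little more work.''
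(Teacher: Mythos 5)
Your diagnosis of why the argument of Lemma \ref{lemma:freeness S X} breaks down in the scalar case is correct, and the reductions you describe are sound as far as they go: Fourier-expanding each $X_i$ via Lemma \ref{lemma:x-relation}, using multilinearity and the products-as-arguments formula \eqref{arguments}, and invoking $\phi$-freeness of $X$ from $M_d(\bC)$ to restrict to partitions whose blocks are pure $X$-blocks or pure $(\bD,S)$-blocks. But the proposal stops exactly where the lemma begins. The vanishing of the genuinely mixed cumulants is supposed to come from a cancellation of the Fourier phases against the scalar cumulants of the deterministic blocks, and you neither state precisely which cancellation is being claimed nor prove it; you explicitly defer it as ``the main obstacle.'' This is not routine bookkeeping: the pure $(\bD,S)$-blocks have nonvanishing scalar cumulants in general, their values are M\"obius sums of traces of products whose form depends on the nesting structure of $\pi$, and the phases created by commuting $\bD$'s past $S$'s inside a block depend on where the arguments sit in the original word. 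Asserting that the surviving character sums ``must cancel whenever the cumulant is genuinely mixed'' is essentially a restatement of the conclusion, so as written this is a plan with the decisive step missing.

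For comparison, the paper avoids cumulants altogether. It first proves (Lemma \ref{lemma:s_x_freeness}) that $\{S,\bD X\bD^{-1},\dots,\bD^{d}X\bD^{-d}\}$ is a free family: an alternating product of centred elements of $\alg(1,S)$ and of the conjugated algebras $\bD^{i}\alg(1,X)\bD^{-i}$ is rewritten as an alternating product of centred elements of $\cB_0=M_d(\bC)$ and $\cB_1=\alg(1,X)$, because the matrices sandwiched between consecutive centred copies of elements of $\alg(1,X)$ are either $\bD^{-j_i+j_{i+1}}$ with $j_{i+1}\not\equiv j_i$ or $\bD^{-j_i}C\bD^{j_{i+2}}$ with $C$ a centred polynomial in $S$, and these are centred in $M_d(\bC)$ by Lemma \ref{lemma:centred_polynomial}; the assumed $\phi$-freeness of $X$ from $M_d(\bC)$ then makes the whole word centred. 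Since each $X_i$ is a linear combination of the $\bD^{j}X\bD^{-j}$ by Lemma \ref{lemma:x-relation}, $S$ is free from $\{X_i\}_i$. If you want to push your cumulant route through, the input you would have to exploit is exactly the trace identities of Lemma \ref{lemma:centred_polynomial} ($\phi(\bD^{l})=0$ for $l\not\equiv 0$ and $\phi(\bD^{-k}S^{m}\bD^{l})=0$ for $m\not\equiv 0$), and once those are isolated the moment/centred-word argument is both shorter and cleaner than tracking phases through $\kappa_\pi$.
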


In order to prove the lemma above we need a series of results regarding freeness and conjugation with powers of the matrix $S$.

\begin{lemma}\label{lemma:centred_polynomial}
Let $C$ be a centred polynomial in $S$. Then for all $k$ and $l$ we have $\phi(C\bD ^l) =
\phi(\bD ^{-k}C\bD ^l) = 0$; and for $l \not\equiv 0$ we
have $\phi(\bD ^l) = 0$
\end{lemma}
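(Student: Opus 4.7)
The plan is to reduce all three statements to elementary computations against the normalized trace $\tr$ on $M_d(\bC)$, using the observation that $\phi$ restricted to $M_d(\bC)$ is precisely $\tr$, which in particular is tracial. Writing $C = \sum_{k=0}^{d-1} c_k S^k$, the centring hypothesis $\phi(C)=0$ combined with the trivial fact that $\tr(S^k)=0$ for $k \not\equiv 0 \pmod d$ (the cyclic permutation $S^k$ has zero diagonal in that case) and $\tr(S^0)=1$ forces $c_0 = 0$.

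For the third assertion, I would compute directly: $\phi(\bD^l) = \tr(\bD^l) = \frac{1}{d}\sum_{j=0}^{d-1}\omega^{jl}$, and the standard geometric-series argument shows this vanishes whenever $l \not\equiv 0 \pmod d$. For the first assertion, the key observation is that $\bD^l$ is diagonal, so the diagonal entries of $S^k\bD^l$ are simply $(S^k)_{ii}(\bD^l)_{ii}$, which are all zero for $k\not\equiv 0$. Thus $\phi(S^k\bD^l)=0$ for $k \not\equiv 0$, and linearity gives
\[
\phi(C\bD^l) \;=\; \sum_{k=0}^{d-1} c_k\, \phi(S^k\bD^l) \;=\; c_0\, \phi(\bD^l) \;=\; 0
\]
since $c_0=0$. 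Finally, for the mixed expression I would invoke traciality of $\tr$ on $M_d(\bC)$:
\[
\phi(\bD^{-k} C \bD^l) \;=\; \phi(C \bD^l \bD^{-k}) \;=\; \phi(C \bD^{l-k}),
\]
which vanishes by what was just proved.

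No step constitutes a serious obstacle; the only thing to take care with is the bookkeeping of indices modulo $d$ and the observation that multiplying on either side by a diagonal matrix does not alter the property ``$S^k$ has zero diagonal for $k\not\equiv 0$''. The use of traciality sidesteps any need to invoke the commutation relations $S^k\bD^l = \omega^{kl}\bD^l S^k$ directly.
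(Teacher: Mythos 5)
Your proof is correct and takes essentially the same route as the paper: the root-of-unity sum gives $\phi(\bD^l)=0$ for $l\not\equiv 0$, the vanishing diagonal of $S^k\bD^l$ for $k\not\equiv 0$ handles the polynomial terms, and centredness (which you make explicit as $c_0=0$) kills the remaining term. The only cosmetic difference is that you treat $\phi(\bD^{-k}C\bD^l)$ via cyclicity of the trace on $M_d(\bC)$, whereas the paper just notes that $\bD^{-k}S^m\bD^l$ also has zero diagonal for $m\not\equiv 0$; both are immediate.
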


\begin{proof}
$\phi(\bD ^l) = d^{-1}(1 + \omega^l + \omega^{2l} + \cdots +
  \omega^{(d-1)l}) = 0$ for $l \not \equiv 0$. Also $S^m
  \bD _l$ and $\bD ^{-k}S^m \bD ^l$ are 0 on the diagonal for $m
  \not\equiv 0$. Thus $\phi(S^m \bD _l) =
  \phi(\bD ^{-k}S^m \bD ^l) = 0$.
\end{proof}

Let us recall from \cite[Lemma 3.8]{hla} the following lemma. 
\begin{lemma}\label{lemma:1}
Suppose $(\mathcal{A},\tau)$ is a non-commutative probability space. Let $C\in \mathcal{A}$ be $\tau$-free from the algebra
$\mathcal{B}$ and let $U \in \mathcal{B}$ be a unitary operator
with $U^d = 1$ and such that $\phi (U^k) = 0$ when $k
\not\equiv 0 \pmod d$. Let $C_i = U^i C U^{-i}$ for $i =1,
\dots,d$. Then $C_1$, $C_2$, \dots, $C_d$ are $\tau$-free.
\end{lemma}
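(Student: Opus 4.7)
The plan is to verify freeness of $C_1,\dots,C_d$ directly from the definition, assuming $\tau$ is tracial. Fix $n\ge 1$, indices $i_1,\dots,i_n\in\{1,\dots,d\}$ with $i_j\neq i_{j+1}$ for $1\leq j<n$, and polynomials $p_1,\dots,p_n$ with $\tau(p_j(C_{i_j}))=0$ for each $j$. The goal is to show
\[
\tau\bigl(p_1(C_{i_1})\,p_2(C_{i_2})\cdots p_n(C_{i_n})\bigr)=0.
\]
Because $C_i=U^iCU^{-i}$ and $\tau$ is tracial, one has $\tau(p(C_i))=\tau(p(C))$ for every polynomial $p$; in particular, the centering of $p_j(C_{i_j})$ is equivalent to the centering of $p_j(C)$. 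The case $n=1$ is then immediate.

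For $n\ge 2$, I would substitute $p_j(C_{i_j})=U^{i_j}\,p_j(C)\,U^{-i_j}$ into the product, so the intermediate $U$'s combine in pairs, and then use traciality to cycle the leading $U^{i_1}$ to the end. This yields
\[
\tau\bigl(p_1(C)\,U^{k_1}\,p_2(C)\,U^{k_2}\cdots p_n(C)\,U^{k_n}\bigr),
\]
where $k_j\equiv i_{j+1}-i_j\pmod d$ for $j<n$ and $k_n\equiv i_1-i_n\pmod d$. The reduced-word hypothesis $i_j\neq i_{j+1}$ forces $k_j\not\equiv 0\pmod d$ for $j<n$, and hence $\tau(U^{k_j})=0$ by the standing hypothesis on $U$. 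Every $p_j(C)$ is centered by the first step.

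The final step is to recognize the displayed expression as an alternating product of centered elements drawn alternately from $\mathrm{alg}(C)$ and $\mathcal{B}$. If $k_n\not\equiv 0$, the product is fully alternating. If $k_n\equiv 0$, then $U^{k_n}=1$ can simply be deleted, leaving an alternating centered word that begins and ends with a factor from $\mathrm{alg}(C)$. In either case, the freeness of $C$ from $\mathcal{B}$ forces the trace to vanish, and we are done.

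The main obstacle is the cycling step: it relies on $\tau$ being tracial, and the wrap-around exponent $k_n$ must be controlled even when $i_1=i_n$ (which is not ruled out by the reduced-word condition). That control is exactly where the hypothesis $\tau(U^k)=0$ for $k\not\equiv 0\pmod d$ is essential --- without it, the cycled $U^{k_n}$ could fail to be centered and the alternating-word argument would break down.
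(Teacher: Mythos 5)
You are proving a statement the paper itself does not prove: Lemma \ref{lemma:1} is simply quoted from Haagerup--Larsen \cite[Lemma 3.8]{hla}, whose natural home is a finite (hence tracial) von Neumann algebra. Your direct verification is the natural one, and most of it is sound: the bookkeeping $k_j\equiv i_{j+1}-i_j\not\equiv 0\pmod d$ for $j<n$, the reduction of the centering of $p_j(C_{i_j})$ to that of $p_j(C)$, and the final appeal to freeness of $\mathrm{alg}(1,C)$ from $\mathcal{B}$ applied to an alternating centred word are all correct. The genuine issue is the extra hypothesis you introduce: the lemma, as stated here, is for a general non-commutative probability space, and in this paper it is applied with $\phi=\tr\otimes\tau$ for an arbitrary $(\mathcal{A},\tau)$, which need not be tracial. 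Your cycling step (moving $U^{i_1}$ around the trace to merge it with $U^{-i_n}$), and also your justification of $\tau(p(C_i))=\tau(p(C))$, use traciality essentially, as you yourself flag; so as written the argument does not give the lemma in the stated generality.

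The gap is easy to close, and the fix makes the cycling unnecessary. Do not cycle: keep the word in the form $U^{i_1}p_1(C)U^{k_1}p_2(C)\cdots U^{k_{n-1}}p_n(C)U^{-i_n}$. The interior factors $U^{k_j}$ with $k_j\not\equiv 0$ are centred by hypothesis, and each boundary factor $U^{i_1}$, $U^{-i_n}$ is either equal to $1$ (when the exponent is $\equiv 0 \pmod d$, using $U^d=1$), in which case delete it, or is itself centred by the same hypothesis on $U$. After deleting trivial factors the word is already an alternating product of centred elements of $\mathrm{alg}(1,C)$ and $\mathcal{B}$, so freeness gives $\tau=0$ with no traciality. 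Similarly, $\tau(p(C_i))=\tau(p(C))$ follows from freeness alone: for $a\in\mathrm{alg}(1,C)$ and $b_1,b_2\in\mathcal{B}$ one has $\tau(b_1ab_2)=\tau(a)\tau(b_1b_2)$, hence $\tau(U^ip(C)U^{-i})=\tau(p(C))$. With these two changes your proof is complete, and it becomes, in substance, the same endpoint-centring, alternating-word device the paper uses (with $\bD$ in place of $U$, via Lemma \ref{lemma:centred_polynomial}) in its proof of Lemma \ref{lemma:s_x_freeness}.
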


\begin{corollary}
Suppose $X \in M_d(\cA)$ is $\phi$-free from
$M_d(\bC)$. Then the set $\{\bD X\bD ^{-1}, \bD ^2 X \bD
^{-2}, \dots, \bD ^d X \bD ^{-d} \}$ is a free family.
\end{corollary}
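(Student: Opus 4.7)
The plan is to apply Lemma \ref{lemma:1} directly, with the unitary $U = \bD$, the free element $C = X$, and the subalgebra $\cB = M_d(\bC) \subseteq M_d(\cA)$. The corollary reduces to checking the three hypotheses of that lemma.

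First I would verify that $\bD$ has the properties required. The matrix $\bD$ is diagonal with $d$-th roots of unity on its diagonal, so it is unitary and satisfies $\bD^d = I$. The trace condition $\phi(\bD^k) = 0$ for $k \not\equiv 0 \pmod d$ is exactly the content of the first assertion of Lemma \ref{lemma:centred_polynomial} (and is a one-line computation in any case, being the geometric sum $d^{-1}\sum_{j=0}^{d-1}\omega^{jk}$).

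Next, since $X$ is assumed $\phi$-free from all of $M_d(\bC)$, $X$ is in particular $\phi$-free from the unital subalgebra $\cB = M_d(\bC)$ that contains $\bD$. Thus the hypotheses of Lemma \ref{lemma:1} are satisfied with $(\cA,\tau) = (M_d(\cA),\phi)$, $C = X$, and $U = \bD$. Applying the lemma yields that $\bD^i X \bD^{-i}$, $i = 1, \dots, d$, form a $\phi$-free family, which is the conclusion of the corollary.

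There is really no obstacle here; the corollary is essentially a packaging of Lemma \ref{lemma:1} in our specific setting. The only point worth stating carefully is that freeness of $X$ from $M_d(\bC)$ implies freeness from the subalgebra generated by $\bD$, so that the hypothesis ``$C$ is free from $\cB$ and $U \in \cB$'' of Lemma \ref{lemma:1} applies verbatim.
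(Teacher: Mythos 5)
Your proposal is correct and matches the paper's (implicit) argument: the corollary is stated immediately after Lemma \ref{lemma:1} precisely because it follows by taking $U=\bD$, $C=X$, $\cB=M_d(\bC)$, with $\bD^d=1$ and $\phi(\bD^k)=0$ for $k\not\equiv 0 \pmod d$ supplied by Lemma \ref{lemma:centred_polynomial}. Nothing further is needed.
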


The following lemma is a generalization of Lemma \ref{lemma:freeness S X} where we now show
that $S$ is free from $\{\bD X\bD ^{-1}, \bD ^2 X \bD ^{-2},
\dots, \bD ^d X \bD ^{-d} \}$.

\begin{lemma}\label{lemma:s_x_freeness}
Suppose $X \in M_d(\cA)$ is $\phi$-free from
$M_d(\bC)$. Then the set $\{S, \bD X\bD ^{-1}, \bD ^2 X \bD
^{-2}, \dots, \bD ^d X \bD ^{-d} \}$ is a free family.
\end{lemma}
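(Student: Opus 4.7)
The plan is to verify the alternating-product characterization of freeness: for any $n \geq 1$ and any sequence $a_1, \ldots, a_n$ of $\phi$-centred elements with each $a_i$ in $\mathrm{alg}(S)$ or $A_{j} := \mathrm{alg}(\bD^{j} X \bD^{-j})$ for some $j \in \{1,\ldots,d\}$, and consecutive entries in distinct subalgebras, I must show $\phi(a_1 \cdots a_n) = 0$. Because $\tr$ is tracial on $M_d(\bC)$, every $a_i \in A_j$ has the form $\bD^j p_i \bD^{-j}$ with $p_i \in \mathrm{alg}(X)$, and the centring condition $\phi(a_i) = 0$ reduces to $\phi(p_i) = 0$; every centred element of $\mathrm{alg}(S)$ has the form $C = \sum_{k \not\equiv 0} c_k S^k$.

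The first step is to reorganize the product as
\[
a_1 \cdots a_n \;=\; \tilde Q_0 \, p_1 \, Q_1 \, p_2 \, Q_2 \cdots Q_{m-1}\, p_m \, \tilde Q_m,
\]
where $p_1, \ldots, p_m$ are the centred $X$-polynomials extracted from the $X$-type entries, and each $Q_i$ and each boundary $\tilde Q_i$ lies in $M_d(\bC)$. Alternation forces any two consecutive $X$-type entries to be separated by at most one $S$-type entry (otherwise two consecutive entries would both lie in $\mathrm{alg}(S)$). Hence each interior $Q_l$ equals either $\bD^{-j_l + j_{l+1}}$ (no $S$-factor intervenes) or $\bD^{-j_l} C \bD^{j_{l+1}}$ (a single centred $C$ in $S$ intervenes); the boundary $\tilde Q_0$ and $\tilde Q_m$ are built analogously, according to whether $a_1, a_n$ are of $S$- or $X$-type.

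The second step is to verify that every interior $Q_l$, and in most cases each boundary $\tilde Q_i$, is $\phi$-centred. For the first form, Lemma \ref{lemma:centred_polynomial} supplies $\phi(\bD^l) = 0$ when $l \not\equiv 0 \pmod d$, and alternation of the $X$-type algebras forces $j_{l+1} \not\equiv j_l$. For the second form, the same lemma provides $\phi(\bD^{-k} C \bD^{l}) = 0$ for every centred $C$ in $S$ and all $k, l$. These two identities also handle $\tilde Q_0$ and $\tilde Q_m$; the one edge case is when $a_1$ (respectively $a_n$) is of $X$-type with exponent $\equiv 0 \pmod d$, so that it already lies in $\mathrm{alg}(X)$: then $\tilde Q_0$ (respectively $\tilde Q_m$) equals the identity, which I simply absorb, shortening the product while preserving the alternating structure.

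After this reduction, the product is an alternating sequence of $\phi$-centred elements drawn from $\mathrm{alg}(X)$ and $M_d(\bC)$, and the hypothesis that $X$ is $\phi$-free from $M_d(\bC)$ then gives $\phi(a_1 \cdots a_n) = 0$. The main obstacle I expect is the boundary bookkeeping, particularly the identity-absorption case, where one must check that collapsing the identity factor does not spoil alternation; the interior reorganization is routine from the Weyl-type commutation $\bD^l S^k = \omega^{-kl} S^k \bD^l$ and the observation that the commutation sends centred polynomials in $S$ to centred polynomials in $S$.
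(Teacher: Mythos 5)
Your proposal is correct and follows essentially the same route as the paper's proof: rewrite the alternating word over $\alg(S)$ and the conjugated algebras $\bD^{j}\alg(X)\bD^{-j}$ as an alternating word in centred elements of $M_d(\bC)$ and $\alg(1,X)$, using Lemma \ref{lemma:centred_polynomial} to check the scalar connectors $\bD^{j_{l+1}-j_l}$ and $\bD^{-j_l}C\bD^{j_{l+1}}$ are centred, and then invoke the freeness of $X$ from $M_d(\bC)$. Your explicit treatment of the boundary factors (including absorbing the identity when the outer exponent is $\equiv 0 \pmod d$) is only a more careful bookkeeping of a step the paper leaves implicit, not a different argument.
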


\begin{proof}
Let $\cB_0 = M_d(\bC)$ and $\cB_1 = \alg(1, X)$ be the
algebra generated by $1$ and $X$. By assumption $\cB_0$ and
$\cB_1$ are free, so any word which is an alternating
product of centred elements is centred. For $i = 1, \dots,
d$, let $\cA_i = \bD ^{i} \alg(1, X) \bD ^{-i}$ and $\cA_0 =
\alg(1, S)$. We must show that $\cA_0, \cA_1, \dots, \cA_d$
are free. Let $W$ be an alternating product of centred
elements in the algebras $\cA_0, \cA_1, \dots, \cA_d$. We
will show that $W$ is also an alternating product of centred
elements in the algebras $\cB_0$ and $\cB_1$ and thus is
centred. This will show that $\cA_0, \cA_1, \dots, \cA_d$
are free.

So, to this end let $W$ be an alternating product of centred
elements in the algebras $\cA_0, \cA_1, \dots, \cA_d$. We
may write $W = V_1 \cdots V_k$ with $V_i \in \cA_{j_i}$ such
that $j_1, \dots, j_k \in \{0,1, \dots, d\}$, $j_1 \not= j_2
\not= \cdots \not= j_k$ and $\phi(V_i) = 0$. For each $i$
for which $j_i > 0$ there is $A_i \in \alg(1, X)$ with
$\phi(A_i) = 0$ and $V_i = \bD ^{j_i} A_i \bD ^{-j_{i}}$.

If $i$ is such that $j_i \not = 0$ then we can write $V_i
V_{i+1}$ as either $\bD ^{j_i} A_i\ab \bD ^{-j_i + j_{i+1}} A_{i +
  1} \cdots $ if $j_{i+1} \not= 0$ or $\bD ^{j_i} A_i
[\bD ^{-j_i}V_{i+1}\bD ^{j_{i+2}}] A_{i+2} \cdots$ if $j_{i+1} =
0$ (with $\bD ^{-j_i}V_{i+1}\bD ^{j_{i+2}} \in \cB_0$). In either
case, by Lemma \ref{lemma:centred_polynomial}, $A_i$ is
followed by a centred element of $\cB_0$.  If $j_i = 0$ then
$j_{i+1} \not = 0$ so $V_{i-1}V_i V_{i+1} = \cdots A_{i-1}
[\bD ^{-{j_{i-1}}}V_i \bD ^{j_{i+1}}]A_{i+1} \cdots$. So again we
have a alternating product of centred elements of $\cB_0$
and $\cB_1$. This proves the claim.
\end{proof}

Now we are ready to prove Lemma \ref{corollary:s_x_freeness}.
\begin{proof}[Proof of Lemma $\ref{corollary:s_x_freeness}$]
We have by Lemma \ref{lemma:s_x_freeness}, $\{S, \bD X\bD ^{-1}, \bD ^2 X \bD ^{-2}, \ab
\dots, \ab \bD ^d X \bD ^{-d} \}$ is a free family. 
In particular, we have that $S$ is
free from any linear combination of $\{\bD X\bD ^{-1}$,\ab  $\bD
^2 X \bD ^{-2}$, \dots, $\bD ^d X \bD ^{-d}\}$ and thus from
$\{X_i\}_i$, by Lemma $\ref{lemma:x-relation}$.
\end{proof}

Next we come to the analogs of Lemmas \ref{(off diagonal)}, \ref{lemma:Y-cumulants}, and \ref{lemma:S-cumulants}. The reader will easily convince himself that the proofs of these lemmas are
also valid in the case $X \in M_d(\cA)$ is $\phi$-free
from $M_d(\bC)$, by replacing the diagonal matrices $D_i$'s
by scalars (or by $1$), and then use Lemma~\ref{corollary:s_x_freeness} instead of Lemma
\ref{lemma:freeness S X}.

Similarly the proof of the following theorem follows exactly the same steps as the proof of Theorem \ref{main 1}, with the obvious changes.

\begin{theorem}\label{theorem: main 1:scalar}
Suppose $X$ is free from $M_d(\bC)$ over $\bC$. Let $Y_0$, $Y_1$,
\dots, $Y_{d-1}$ be the diagonal decomposition of $X^t$.

For $d$ even, $Y_0, \{ Y_1, Y_{d-1}\}, \{Y_2, Y_{d-2}\},\ab
\dots, \{Y_{d/2-1}, Y_{d/2 + 1}\}, Y_{d/2}$ is a free family
over $\bC$ and $\{Y_i, Y_{d-i}\}$ is a $R$-diagonal pair for $i = 1,
\dots,\ab d/2-1$.

For $d$ odd, $Y_0, \{ Y_1, Y_{d-1}\}, \{Y_2, Y_{d-2}\},
\dots, \{Y_{(d-1)/2}, Y_{(d+1)/2}\}, $ is a free family over $\bC$ and
$\{Y_i, Y_{d-i}\}$ is a $R$-diagonal pair for $i = 1, \dots,
(d-1)/2$.
\end{theorem}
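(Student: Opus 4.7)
The plan is to follow the template of the proof of Theorem \ref{main 1} verbatim, with the three operator-valued ingredients replaced by their scalar analogs. Concretely, I would first establish the scalar versions of Lemmas \ref{(off diagonal)}, \ref{lemma:Y-cumulants}, and \ref{lemma:S-cumulants} for the cumulants $\kappa_r$ with respect to $\phi$ (no $D_i$'s, or equivalently $D_i = 1$). Inspection of those proofs shows that the only properties of the $D_i$'s used are $\cD$-bilinearity of cumulants and the vanishing $\phi(\bD^l) = 0$ for $l \not\equiv 0 \pmod d$; both become trivial or automatic in the scalar setting, and one simply replaces every invocation of Lemma \ref{lemma:freeness S X} by Lemma \ref{corollary:s_x_freeness}. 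The induction step in the analog of Lemma \ref{lemma:S-cumulants}(ii) still goes through: we substitute $D = S^k S^{-k} = 1$ and apply the product formula, using the fact that scalar cumulants $\kappa_r(1, \cdot, \dots, \cdot) = 0$ for $r \geq 2$ to kill the leading term and leave exactly the identity we need.

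With these scalar lemmas in hand, I would write $Y_i = S^i \tilde X_i S^i$ using Lemma \ref{lemma:x-relation}(ii), and aim to show that for $r > 1$ and $i_1, \dots, i_r \in \{0,1,\dots,d-1\}$,
\[
\kappa_r(Y_{i_1}, \dots, Y_{i_r}) = 0
\]
unless either $i_1 = \cdots = i_r = 0$, or $r$ is even and $i_l + i_{l+1} \equiv 0 \pmod d$ for $l = 1, \dots, r-1$. As in the proof of Theorem \ref{main 1}, I expand each $Y_{i_j}$ as the triple product $S^{i_j} \tilde X_{i_j} S^{i_j}$ (or a single factor $\tilde X_0$ when $i_j = 0$) and apply the product formula for free cumulants with $\sigma$ the interval partition whose blocks have size $3$ for nonzero indices and size $1$ for zero indices. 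The sum then runs over non-crossing $\pi$ with $\pi \vee \sigma = 1$, and by Lemma \ref{corollary:s_x_freeness} any non-vanishing $\kappa_\pi$ must have each of its blocks be either an $S$-block or an $X$-block.

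The combinatorial case analysis is identical to Theorem \ref{main 1}. \textbf{Case 1:} some $i_l = 0$. An $X$-block containing a distinguished $\tilde X_0$ must have another element, and the $S$-data squeezed between them must satisfy $2i_{t_1} + \cdots + 2i_{t_{k-1}} + i_{t_k} \equiv 0$ by the $S$-cumulant lemma while the $\tilde X$-indices must sum to $0$ by the $\tilde X$-cumulant lemma; combining these forces $i_{t_k} \equiv 0$, and iterating forces all indices to be $0$. \textbf{Case 2:} every $i_l \geq 1$. Analyzing how an $S$-block containing position $3s_1$ can connect to a later position $b' \in \{3s_k - 2, 3s_k\}$, the first possibility is ruled out by Case 1, and the second forces $k = 2$ with $s_2 = s_1 + 1$; thus every $S$-block has the adjacent-pair form $\{3b_1, 3b_1+1, 3b_2, 3b_2+1, \dots, 3b_l+1\} \pmod{3r}$, which in particular makes $r$ even. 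Using the two $S$-cumulant vanishing statements between consecutive pairs then propagates $i_{b_i+1} \equiv -i_{b_{i+1}}$ and ultimately $i_1 \equiv -i_2 \equiv \cdots \equiv -i_r$, i.e.\ the $R$-diagonal pattern.

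The main obstacle, and in fact the only place where extra care is needed relative to Theorem \ref{main 1}, is verifying that the scalar version of Lemma \ref{lemma:S-cumulants} genuinely survives without the $D$'s—the operator-valued proof sets $D = S^k D_1 S^{-k} D_2 \in \cD$, whereas in the scalar case we need the corresponding $D = S^k S^{-k} = 1 \in \bC$ argument to still eliminate the unwanted terms. This is handled by noting that scalar cumulants vanish when any argument is a constant, so $\kappa_{2r+1}(1, S^{i_1}, S^{-i_1}, \dots) = 0$ automatically, giving the same recursion. Once this is in place, the rest of the proof is a line-by-line copy of the proof of Theorem \ref{main 1}, and the freeness and $R$-diagonality conclusions over $\bC$ follow immediately.
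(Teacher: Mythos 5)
Your proposal is correct and follows essentially the same route as the paper: the paper itself proves the scalar case by noting that the proofs of Lemmas \ref{(off diagonal)}, \ref{lemma:Y-cumulants}, and \ref{lemma:S-cumulants} go through with the $D_i$'s replaced by scalars, substituting Lemma \ref{corollary:s_x_freeness} for Lemma \ref{lemma:freeness S X}, and then repeating the proof of Theorem \ref{main 1} verbatim. Your extra remark that scalar cumulants of order at least two vanish when an argument is a constant is precisely the scalar counterpart of the vanishing of operator-valued cumulants with a $\cD$-valued entry used in the induction step, so nothing further is needed.
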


\section{Return to the Wishart case}\label{sec:return_wishart}

Let us conclude by returning to the case of a Wishart matrix
$W$ (c.f. Eq. (\ref{eq:wishart})) where $d_1$ is fixed and
$p/d_2 \rightarrow c d_1$, but in the case of arbitrary
$d_1$. Let $w$ be the limit distribution of the Wishart
matrix in Eq. (\ref{eq:wishart}). We write $d_1 w$ as a $d_1
\times d_1$ matrix: $d_1 w = (w_{ij})_{ij}$. Then $d_1
W^\ltr$ has the limit distribution $d_1w^t =
(w_{ji})_{ij}$. When $d_1$ is even let
\begin{equation}\label{eq:even_decomposition}
d_1 w^t = Y_0 + Y_1 + Y_{d_1 -1} + \cdots +
Y_{d_1/2 - 1} + Y_{d_1/2 + 1} + Y_{d_1/2},
\end{equation}
and when $d_1$ is odd let
\begin{equation}\label{eq:odd_decomposition}
d_1 w^t = Y_0 + Y_1 + Y_{d_1 -1} + \cdots +
Y_{(d_1 + 1)/2 - 1} + Y_{(d_1 + 1)/2 + 1}. 
\end{equation}
Since all the off-diagonal entries of $w$ are
$R$-diagonal with the same distribution:
$\tilde\kappa_{2n}(w_{12}, w_{21}, \dots, w_{12}, w_{21}) = c$ and
the diagonal entries are all $\mrp_{c}$, we have that 
$\tilde\kappa_r(Y_{i_1}, \dots, Y_{i_r}) = \kappa_r(Y_{i_1}, \dots, Y_{i_r})$.

Thus, we are in the scalar case, as in Section 3.3. Thus, we arrive to the next theorem that states that the diagonal decomposition of $d_1w^t$ is a free decomposition. 

\begin{theorem}\label{thm:original_case}
Let $w$ be the limit distribution of the Wishart matrix $W$ in Eq. $(\ref{eq:wishart})$. Suppose 
$d_1$ is fixed and $p/d_2 \rightarrow c d_1$. Then
 $Y_0$, $Y_1$, \dots, $Y_{d_1/2}$
are $*$-free when $d_1$ is even and when $d_1$ is odd we
have $Y_0$, $Y_1$, \dots, $Y_{(d_1+1)/2}$ are
$*$-free. Moreover $Y_0$ has the distribution $\mrp_{cd_1}$,
$Y_{d_1 - i} = Y_i^*$, and for $i \geq 1$, $Y_i$ is an
$R$-diagonal operator with $\kappa_{2n}(Y_i, Y^*_i, \dots,
Y_i, Y_i^*) = cd_1$. When $d_1$ is even $Y_{d_1/2}$ is even with
$\kappa_{2n}(Y_{d_1/2}, Y^*_{d_1/2}, \dots,
Y_{d_1/2}, Y_{d_1/2}^*) = cd_1$. 
\end{theorem}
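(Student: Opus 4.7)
The plan is to apply Theorem \ref{theorem: main 1:scalar} to $X = d_1 w$ and then pin down the specific numerical cumulants. Because $W$ is unitarily invariant, the argument in Remark \ref{rem:abundance} realises $w$ in a non-commutative probability space containing a set of matrix units $\{E_{ij}\}$ free from $w$; since $w \sim \mrp_c$ has uniform $R$-cyclic structure (the entries depend only on whether they are diagonal or off-diagonal), this upgrades to scalar freeness of $w$ from $M_{d_1}(\bC)$. The hypotheses of Theorem \ref{theorem: main 1:scalar} are thus met, and the theorem immediately delivers the $*$-freeness (over $\bC$) of the family $Y_0, \{Y_1, Y_{d_1-1}\}, \ldots$ with the claimed grouping, together with the $R$-diagonal pair property of each $\{Y_i, Y_{d_1-i}\}$.

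To promote ``$R$-diagonal pair'' to ``$Y_i$ is $R$-diagonal'', I would verify $Y_{d_1-i} = Y_i^*$. The Wishart matrix is self-adjoint, so the entries satisfy $w_{ij}^* = w_{ji}$. Since $(Y_k)_{ab} = w_{ba}$ when $b \equiv a + k \pmod{d_1}$ and vanishes otherwise, a direct calculation gives $(Y_k^*)_{ab} = w_{ab}^* = w_{ba}$ at positions $b \equiv a - k$, i.e.\ $Y_k^* = Y_{d_1-k}$. With this identification, each $Y_i$ ($1 \leq i < d_1/2$) is genuinely $R$-diagonal, and when $d_1$ is even $Y_{d_1/2}$ is self-adjoint.

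For the numerical values, I would first handle $Y_0$ by free compression. Since $p = E_{11}$ is a projection of trace $1/d_1$ in $M_{d_1}(\bC)$ free from $w \sim \mrp_c$, the free Poisson compression rule gives that $d_1 \cdot pwp$, viewed in $p\cA p$ with its normalised trace and identified with $w_{11} \in \cB$, has distribution $\mrp_{cd_1}$. Unitary invariance forces each $w_{ii}$ to share this distribution, so $Y_0 = \sum_i w_{ii}E_{ii}$ has spectral distribution $\mrp_{cd_1}$ with respect to $\phi$.

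For the remaining cumulants I would exploit the representation $\tilde X_i = \sum_{l=1}^{d_1} \omega^{-il}\, \bD^l w \bD^{-l}$ from Lemma \ref{lemma:x-relation}. Because $\bD$ is a unitary of order $d_1$ in $M_{d_1}(\bC)$ with $\phi(\bD^k)=0$ for $k \not\equiv 0$, Lemma \ref{lemma:1} makes $\{\bD^l w \bD^{-l}\}_{l=1}^{d_1}$ a free family, each copy $\mrp_c$-distributed, so multilinearity yields
\[
\kappa_n(\tilde X_{i_1}, \dots, \tilde X_{i_n}) \;=\; c \sum_{l=1}^{d_1} \omega^{-l(i_1+\cdots+i_n)} \;=\; \begin{cases} cd_1 & \text{if } i_1+\cdots+i_n \equiv 0 \!\!\pmod{d_1}, \\ 0 & \text{otherwise.} \end{cases}
\]
Combining this with the freeness of $S$ from $\{\tilde X_j\}$ (Lemma \ref{corollary:s_x_freeness}), the $S$-cumulant identities of Lemma \ref{lemma:S-cumulants}, and the product-cumulant formula applied to $Y_i = S^i \tilde X_i S^i$ delivers the claimed values $\kappa_{2n}(Y_i, Y_i^*, \dots) = cd_1$ for $1 \leq i \leq d_1/2$. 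In the even case the evenness of $Y_{d_1/2}$ follows from the same sum condition: $n\cdot d_1/2 \equiv 0 \pmod{d_1}$ exactly when $n$ is even, so all odd cumulants vanish. The main obstacle is precisely this last translation---unravelling the $S$-conjugations via the product-cumulant expansion---but the bookkeeping is the same as that already carried out in the proof of Theorem \ref{main 1}, now specialised with the explicit $\tilde X_i$-cumulant values as input, so the claimed numbers drop out.
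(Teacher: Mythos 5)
Much of your outline is fine: the reduction to Theorem \ref{theorem: main 1:scalar} (note that $\phi$-freeness of $w$ from $M_{d_1}(\bC)$ is already immediate from freeness from the matrix units, so no ``upgrade'' via uniform $R$-cyclicity is needed), the verification $Y_k^*=Y_{d_1-k}$ from self-adjointness of $w$, the compression argument giving $Y_0\sim\mrp_{cd_1}$, and the computation $\kappa_n(\tilde X_{i_1},\dots,\tilde X_{i_n})=cd_1$ if $i_1+\cdots+i_n\equiv 0$ and $=0$ otherwise are all correct. The gap is the final step, where you assert that $\kappa_{2n}(Y_i,Y_i^*,\dots,Y_i,Y_i^*)=cd_1$ ``drops out'' of the same bookkeeping as in Theorem \ref{main 1}. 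That proof, and Lemma \ref{lemma:S-cumulants}, establish only \emph{vanishing} conditions; no nonzero cumulant is ever evaluated there. To evaluate $\kappa_{2n}(S^i\tilde X_iS^i, S^{-i}\tilde X_{d_1-i}S^{-i},\dots)$ through the product-cumulant expansion you must sum over all contributing $\pi$, weighted by the actual cumulants of powers of $S$, i.e.\ of a root-of-unity Haar element of order $k=d_1/\gcd(i,d_1)$; these are sign-alternating, depend on $k$, and are computed nowhere in the paper. Moreover, when $k$ is small both ingredients acquire extra nonvanishing, non-alternating terms (for instance $\kappa_3(\tilde X_i,\tilde X_i,\tilde X_i)=cd_1$ whenever $3i\equiv 0 \pmod{d_1}$, and similarly $\kappa_3(S^i,S^i,S^i)\neq 0$), so the claimed answer, which is independent of $i$, can only emerge after a cancellation your sketch does not address. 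The same issue affects the even cumulants of $Y_{d_1/2}$.

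A short way to close the gap --- and essentially what the paper's preceding discussion relies on --- is to read the values off the entries rather than through the $(S,\bD)$ picture. By the compression formula, the cyclic cumulants of the entries of $d_1w$ are all equal to $cd_1$; hence the diagonal entries are $\mrp_{cd_1}$ and each pair $(w_{a+i,a},\,w_{a,a+i})$ is an $R$-diagonal pair with constant determining sequence $cd_1$. Since $Y_iY_i^*=\sum_a w_{a+i,a}\,w_{a,a+i}\,E_{aa}$, the moments $\phi\big((Y_iY_i^*)^n\big)$ coincide with the corresponding moments of an $R$-diagonal element whose determining cumulants are all $cd_1$. Because Theorem \ref{theorem: main 1:scalar} already tells you that $\{Y_i,Y_i^*\}$ is an $R$-diagonal pair, its $*$-distribution is determined recursively by exactly these alternating moments, which forces $\kappa_{2n}(Y_i,Y_i^*,\dots)=cd_1$; the case $i=d_1/2$ is handled identically, with evenness coming from the scalar analogue of Lemma \ref{(off diagonal)}. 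With that substitution your argument becomes complete; your route via the $\tilde X_i$ cumulants, as written, is not.
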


\section*{Acknowledgements}
This work was carried out during the program `New
Developments in Free Probability and Applications' at the
Centre de Recherches Math\'ematiques (Montreal) March 1 -
31, 2019, as part of the 50$^{\mathrm{th}}$ anniversary year
program. The authors are very grateful to the director and
the staff of the Centre for the funding and hospitality.

\thebottomline
\end{document}